\newcommand{\R}{\mathbb{R}}
\newcommand{\un}{\mathbf{1}\!\!{\rm I}}
\newcommand{\mn}{|\!\!|}
\newcommand{\mnp}{|\!\!|_{M^{d/2}}}
\newcommand{\be}{\begin{equation}}
\newcommand{\ee}{\end{equation}}
\newcommand{\bea}{\begin{eqnarray}}
\newcommand{\eea}{\end{eqnarray}}
\newcommand{\bean}{\begin{eqnarray*}}
\newcommand{\eean}{\end{eqnarray*}}
\newcommand{\rf}[1]{(\ref {#1})}
\def\dx{\,{\rm d}x}
\def\dy{\,{\rm d}y}
\def\dt{\,{\rm d}t}
\def\dr{\,{\rm d}r}
\def\e{\varepsilon}
\def\r{\varrho}
\def\xn{|\!|\!|}
\def\tbl{T_{\rm b}}
\newtheorem{theorem}{Theorem}
\newtheorem{proposition}[theorem]{Proposition}
\newtheorem{lemma}[theorem]{Lemma}
\newtheorem{corollary}[theorem]{Corollary}
\theoremstyle{definition}
\newtheorem{definition}[theorem]{Definition}
\theoremstyle{remark}
\newtheorem{remark}[theorem]{Remark}
\def\int{\intop\limits}
\numberwithin{equation}{section}
\numberwithin{theorem}{section}
\author{Piotr Biler}
\address{\small Instytut Matematyczny, Uniwersytet Wroc\l awski,
 pl. Grunwaldzki 2/4, 50-384 Wroc\-\l aw, Poland}
\email{Piotr.Biler@math.uni.wroc.pl}
\author{Grzegorz Karch}
\address{\small
 Instytut Matematyczny, Uniwersytet Wroc\l awski,
 pl. Grunwaldzki 2/4, 50-384 Wroc\-\l aw, Poland}
\email{Grzegorz.Karch@math.uni.wroc.pl}
\urladdr{http://www.math.uni.wroc.pl/~karch}
\author{Jacek Zienkiewicz}
\address{\small
 Instytut Matematyczny, Uniwersytet Wroc\l awski,
 pl. Grunwaldzki 2/4, 50-384 Wroc\-\l aw, Poland}
\email{Jacek.Zienkiewicz@math.uni.wroc.pl}
\title[blowup in chemotaxis]{Optimal criteria for blowup of radial and \\$N$-symmetric solutions of chemotaxis systems}
\begin{document}

\begin{abstract}
A simple proof of  concentration of mass equal to $8\pi$ for blowing up  $N$-symmetric solutions of the Keller--Segel model of chemotaxis in two dimensions with large $N$ is given. Moreover, a criterion for blowup of solutions in terms of the radial initial concentrations, related to suitable Morrey spaces norms, is derived for radial solutions of chemotaxis in several dimensions. This condition is, in a sense,  complementary to the one guaranteeing the global-in-time existence of solutions.
\end{abstract}

\keywords{chemotaxis, blowup of solutions}

\subjclass[2010]{35Q92, 35B44, 35K55}

\date{\today}

\thanks{The authors thank the referees for many pertinent remarks that permitted them to improve the presentation of results. The preparation of the paper was supported by the NCN grant  2013/09/B/ST1/04412 (the first and the second authors). The third author was also supported by the grant  DEC-2012/05/B/ST1/00692.}

\maketitle

\baselineskip=20pt

\section{Introduction}
We consider in this paper the classical parabolic-elliptic Keller--Segel model of chemotaxis in $d\ge 2$ space dimensions
\bea
u_t-\Delta u+\nabla\cdot(u\nabla v)&=&0,\label{equ}\\
\Delta v+u &=& 0,\label{eqv}
\eea
supplemented with a nonnegative initial condition
\be
u(x,0)=u_0(x)\ge 0\label{ini}.
\ee
Here for $(x,t)\in {\mathbb R}^d\times[0,T)$, the function $u=u(x,t)\ge 0$ denotes the density of the population of microorganisms, $v=v(x,t)$ --- the density of the chemical secreted by themselves that attracts them and makes them to aggregate. The system \rf{equ}--\rf{eqv} also models the gravitational attraction of particles in astrophysical models, see \cite{B-SM, B-CM}. 

As it is well known,  cf. e.g. \cite{BDP,BM}, the total mass of the initial condition
\be
M=\int_{\R^d} u_0(x)\dx\label{M}
\ee
is conserved in time, i.e.
$ \int_{\R^d} u(x,t)\dx = \int_{\R^d} u_0(x)\dx =M$
for all $t\in [0,\tbl)$, and this 
is the critical quantity for the global-in-time existence of nonnegative solutions in the two-dimensional case. Namely, if $M\le 8\pi$, then solutions of \rf{equ}--\rf{ini} (with $u_0$ --- a~finite nonnegative measure)  exist for all $t\ge 0$. For the local-in-time existence, it should be assumed that all the  atoms of the finite measure $u_0$ are of mass less than $8\pi$, see \cite{BDP,BM,BZ}. 
  
When $M>8\pi$, nonnegative solutions blow up in a finite time, and  for radially symmetric solutions  mass equal to $8\pi$ concentrates at the origin at the blowup time, see \cite[Ch. 11]{S}, \cite{B-BCP}, resp.
The multidimensional case is different: there are solutions of the chemotaxis system with arbitrarily small $M>0$ that cease to exist after a finite time elapsed, see for instance 
\cite{B-CM, HV, BK-JEE}. 

Global-in-time solutions of the  chemotaxis system have been constructed in various functional spaces (including,  e.g., finite Radon measures if $d=2$, and Lebesgue $L^p$, Marcinkiewicz weak $L^p$ spaces with  $p=d/2$, Besov and Morrey spaces if $d\ge 3$),  cf. for instance \cite{BM, B-SM, Lem}, under smallness conditions on norms of $u_0$ in a relevant space which is critical for \rf{equ}--\rf{eqv}. Here ``critical'' means that solutions obey  the same scaling property as  the norms in those spaces, cf. \cite{Lem} for more detailed explanations.

First, we show in the present work that the radial concentration of  data is the critical quantity for the finite time blowup of nonnegative radial solutions of \rf{equ}--\rf{ini}. Here, we define {\em the radial concentration} by the formula
\be
\xn u_0\xn\equiv \sup_{R>0}R^{2-d}\int_{\{|x|\le R\}} \psi\left(\frac{x}{R}\right)u_0(x)\dx \label{conc}
\ee
with a fixed radial nonnegative, piecewise ${\mathcal C}^2$ function $\psi$ supported on the unit ball, such that $\psi(0)=1$. Clearly, those quantities for such weight functions $\psi$ are comparable, so we fix in the following $\psi(x)=\left(1-|x|^2\right)^2_+$, see \rf{psi} below. 

Here, one should recall that the homogeneous Morrey space $M^p(\R^d)$, $1<p<\infty$,   is defined as the set of all locally integrable functions $f$ that satisfy
\be
\mn f\mn_{M^p}=\sup_{x_0\in\R^d,\,R>0}R^{d(1/p-1)}\int_{\{|y-x_0|\le R\}} |f(y)|\dy<\infty. \label{norm}
\ee
Of course, for $d\ge 3$ and  $p=d/2$, the norm in $M^{d/2}(\R^d)$ (relevant to the theory of existence of local-in-time solutions) dominates the radial concentration \rf{conc}: $\mn u_0\mnp\ge R^{2-d}\int_{\{|x|\le R\}}u_0(x)\dx$ \ for each $R>0$,  but, in fact, for radially symmetric $u_0$ both  quantities $\mn u_0\mn_{M^{d/2}}$ and $\xn u_0\xn$ are equivalent. 

The criticality of the radial concentration \rf{conc}
means  that for initial data with small $\xn u_0\xn$ solutions exist indefinitely in time (cf. \cite{B-SM, Lem}), while our result stated in  Theorem \ref{radi} below  shows that for sufficiently big $\xn u_0\xn$ regular solutions cease to exist in a finite time. 
This result seems to be new for $d\ge 3$ although related criteria appeared in,  e.g., \cite{B-CM} and \cite{BK-JEE}. They have been, however, formulated in terms of ``global  quantities'' like the second moment $\int |x|^2  u_0(x)\dx$ while \rf{conc} is a local quantity, and its definition does not require supplementary properties of $u_0$ like 
 $\int |x|^2u_0(x)\dx<\infty$.

The proof of our first  result (contained in the following theorem) 
on the occurrence of radially symmetric blowup for $d\ge 2$ does not involve ``global quantities'', and  its idea is astonishingly simple.

\begin{theorem} [Blowup of radial  solutions with large concentration] \label{radi}
For each $d\ge 2$ there exists a constant $C_d>0$ such that if $u_0\in L^1(\R^d)$ is a radially symmetric function and $R^{2-d}\int_{\{|x|\le R\}}\psi\left(\frac{x}{R}\right)u_0(x)\dx>C_d$ for some $R>0$, then the solution $u$ of  problem \rf{equ}--\rf{ini} blows up in a finite time.
\end{theorem}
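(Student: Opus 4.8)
The plan is to run a localized virial argument for the cumulative radial mass, exploiting that the only part of the diffusion that can work against us is confined to the region where the weight $\psi$ is bounded below, so that its effect is controlled by the quantity we are tracking rather than by the total mass. Since $u_\lambda(x,t)=\lambda^2u(\lambda x,\lambda^2t)$ solves \rf{equ}--\rf{eqv} whenever $u$ does, and $R^{2-d}\int_{\{|x|\le R\}}\psi(x/R)u_0\dx$ is invariant under this rescaling (the scale $R$ for $u_\lambda$ corresponds to the scale $\lambda R$ for $u$), I may assume $R=1$. Writing $Q(r,t)=\int_{\{|x|\le r\}}u(x,t)\dx$ for the cumulative mass of a radial solution, one has $Q_r=\sigma_d r^{d-1}u$, the induced gradient $\partial_r v=-Q/(\sigma_d r^{d-1})$, and the one-dimensional equation
\[ Q_t=Q_{rr}-\frac{d-1}{r}Q_r+\frac{1}{\sigma_d r^{d-1}}QQ_r. \]
With $g(r)=(1-r^2)^2$ the hypothesis becomes $I(0)>C_d$, where $I(t):=\int_{\{|x|\le1\}}\psi\,u\dx=\int_0^1 g(r)Q_r(r,t)\dr=-\int_0^1 g'(r)Q(r,t)\dr$.

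I would then differentiate $I$ in time, insert the equation for $Q_t$, and integrate by parts. The diffusive part produces $\int_0^1\Delta g(r)\,Q_r\dr=\int_{\{|x|\le1\}}\Delta\psi\,u\dx$ with $\Delta g=-4d+4(d+2)r^2$, while the drift term, after one integration by parts (the boundary contributions vanish because $g'(1)=0$ and $Q\sim r^d$, $Q_r\sim r^{d-1}$ as $r\to0$), becomes
\[ \frac{1}{2\sigma_d}\int_0^1\Big(\frac{g'(r)}{r^{d-1}}\Big)'Q(r,t)^2\dr. \]
A direct computation gives $\big(g'(r)/r^{d-1}\big)'=4r^{1-d}\big[(d-2)+(4-d)r^2\big]>0$ on $(0,1)$ for every $d\ge2$, so this drift term is nonnegative and is the engine of blowup.

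The two decisive estimates are as follows. For the drift I would use Cauchy--Schwarz against the positive weight $w(r)=\big(g'(r)/r^{d-1}\big)'$: since $I=\int_0^1(-g')Q\dr$,
\[ I(t)^2\le\Big(\int_0^1\frac{g'(r)^2}{w(r)}\dr\Big)\Big(\int_0^1 w(r)Q(r,t)^2\dr\Big)=\kappa_d\int_0^1 w\,Q^2\dr, \]
where $\kappa_d<\infty$ because the integrand $g'(r)^2/w(r)$ vanishes like a positive power of $r$ at $0$ and stays bounded at $1$; hence the drift term is $\ge I(t)^2/(2\sigma_d\kappa_d)$. For the diffusion I would exploit locality: $\Delta\psi<0$ only on $\{|x|^2<d/(d+2)\}$, and there $\psi\ge(2/(d+2))^2$, so $\int_{\{|x|\le1\}}(\Delta\psi)^-u\dx\le 4d\cdot\frac{(d+2)^2}{4}\int\psi\,u\dx=d(d+2)^2 I(t)$. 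Combining the two bounds,
\[ I'(t)\ge\frac{1}{2\sigma_d\kappa_d}I(t)^2-d(d+2)^2\,I(t). \]
Taking $C_d=4\sigma_d\kappa_d\,d(d+2)^2$, if $I(0)>C_d$ then $I$ is increasing and satisfies $I'\ge I^2/(4\sigma_d\kappa_d)$, so $I(t)\to\infty$ in finite time; since $I(t)\le M$ for any genuine mass-conserving solution, this forces the solution to cease to exist in finite time.

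The step I expect to be the main obstacle is precisely the diffusion estimate. What makes $C_d$ a pure dimensional constant, independent of the (possibly enormous) total mass $M$, is that the sole place where diffusion can decrease $I$ is the core $\{|x|^2<d/(d+2)\}$, where $\psi$ is bounded away from $0$, so that this loss is dominated by $I$ itself rather than by the much larger local mass $Q(1,t)$; a cruder bound of the diffusion by $Q(1,t)$ would make the threshold depend on $M$ and the argument collapse. The remaining work is the routine justification of the formal steps (differentiation under the integral, vanishing of the boundary terms, finiteness of $\kappa_d$) for the radial solutions as long as they exist, which I would carry out by a standard regularization of $u_0$ together with a smooth approximation of $\psi$.
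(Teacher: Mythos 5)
Your proposal is correct and follows the same overall strategy as the paper: the same localized moment $\int\psi_R\,u\dx$ with the same weight $\psi(x)=(1-|x|^2)^2_+$, a Riccati-type differential inequality, and the conclusion that this moment would exceed the finite total mass in finite time. The two estimates are, however, executed differently. For the diffusion term your argument (the negative part of $\Delta\psi$ is at most $4d$ and lives only where $|x|^2<d/(d+2)$, where $\psi\ge 4/(d+2)^2$) is just a two-step version of the paper's single pointwise inequality $\Delta\psi\ge-\frac{(d+2)^2}{2}\psi$, so there the approaches coincide in substance. For the drift term the routes genuinely diverge: the paper combines the radial potential formula (Lemma \ref{potential}) with the exact distribution-function identity of Lemma \ref{MM}, $\int_{\{|x|\le R\}}\omega\,M(|x|)\dx=\frac12 M(R)^2$, plus the crude bounds $\psi_R\le\psi_R^{1/2}$ and $|x|^{2-d}\ge R^{2-d}$, whereas you pass to the cumulative mass $Q$, integrate by parts to expose the signed weight $w(r)=\bigl(g'(r)/r^{d-1}\bigr)'=4r^{1-d}\bigl[(d-2)+(4-d)r^2\bigr]$, verify its positivity for all $d\ge2$, and close with a weighted Cauchy--Schwarz inequality against $I=\int_0^1(-g')Q\dr$; your checks that $w>0$ on $(0,1)$ and that $\kappa_d=\int_0^1 g'^2/w\dr<\infty$ are correct in every dimension $d\ge 2$, and the boundary terms do vanish as you claim. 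The trade-off is in the constant: the paper's exact identity yields the explicit value $C_d=(d+2)^2\sigma_d/4$, which equals $8\pi$ at $d=2$ and thus recovers the classical critical-mass threshold (see the remark following Theorem \ref{radi}), while your Cauchy--Schwarz step loses a factor (e.g.\ $\kappa_2=1/3$ gives $C_2=256\pi/3$), which is harmless for the theorem as stated but forfeits sharpness in two dimensions. In exchange, your formulation makes the mechanism of blowup structurally transparent: the entire drift contribution is a single manifestly nonnegative quadratic form in $Q$.
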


\begin{remark}
Note that for $d=2$ we recover the well-known result: if $M>C_2=8\pi$, then the solution of \rf{equ}--\rf{ini} blows up in a finite time, see the end of the proof of Theorem \ref{radi} below. 
In fact, this proof (involving a local moment  of the solution) extends to the two-dimensional case ($x\in \mathbb R^2$) of arbitrary (not necessarily radially symmetric) nonnegative solutions,  cf.  \cite{N,KO,BCKZ}  for similar arguments.  
Some improvements of the results in Theorem  1.1 (with direct relations to the critical values of Morrey norms and with quite different proofs) are in~\cite{BKZ}.
\end{remark}

In our second result, we limit ourselves to the Cauchy problem \rf{equ}--\rf{ini} on the plane, and we show a concentration  at the origin of mass equal {\em exactly} to $8\pi$ for some solutions.
This result is known in the radially symmetric case;  our proof 
allows us to deal with a larger class of solutions, and is conceptually much simpler than existing ones.  
An analysis of the chemotaxis system in bounded planar domains leads to blowups at interior and boundary points, see \cite[Th. 1.1]{S}, and to the quantization of mass $8k\pi$, $k\in \mathbb N$, at the interior blowup points for solutions with finite free energy, see \cite[Th. 1.2, Th. 15. 1]{S}. 
  The proofs  of those results in \cite[Ch. 11--15]{S} rely on subtle estimates of the free energy for \rf{equ}--\rf{eqv}  and various functional inequalities of Gagliardo--Nirenberg--Sobolev type. For an earlier approach to different versions of  conjectures on concentration we refer the reader to \cite[p. 23--24]{SS}. 
Compare  also  \cite{B-BCP} for  the radially symmetric case. 

Our proof for {\em radially symmetric} solutions with 
\be
u_0(z{\rm e}^{i\vartheta})=u_0(z), \ \ \vartheta\in\mathbb R,\label{rad}
\ee 
extends to the case of {\em $N$-symmetric} solutions with sufficiently large $N$,  i.e.  those with the initial data satisfying 
\be
u_0\left(z{\rm e}^{ik\frac{2\pi}{N}}\right)=u_0(z),\ \ \ k\in{\mathbb  N},\label{sym}
\ee
with the natural identification $\R^2\ni x\leftrightarrow z\in \mathbb C$.
Note that by the uniqueness of nonnegative solutions, the solution $u(x,t)$ with $u_0$ satisfying \rf{rad} is radial, and for that satisfying \rf{sym}\ \  $u$ is $N$-symmetric for each admissible $t$: $u\left(z{\rm e}^{i\frac{k}{N}\pi},t\right)=u(z,t)$.  
The interest in such solutions is related to the problem of study of 
certain bilinear integrals involving derivatives of the  fundamental solution of Laplacian.

Moreover, we are motivated by results in \cite{SSV}, where 2-symmetric distributions have been considered, see Remark \ref{rem:Vel} below for more information.

\begin{theorem}[Blowup with $8\pi$ concentration of mass]\label{symm} 
Assume that the initial condition $0\le u_0\in L^1(\R^2)$ is  $N$-symmetric in the sense of equation \eqref{sym} and such that $\int u_0(x)\dx=M>8\pi$.
Let $u(x,t)$, for $x\in \R^2$ and $t< \tbl$, be the corresponding classical solution of problem \rf{equ}--\rf{ini} which cannot be continued past the blowup time $t=\tbl$. 
If $N$ is sufficiently large so that $M/N$ is small enough,  then
$$\limsup_{t \rightarrow \tbl,\ R\to 0}\int_{\{|x|\leq R\}} u(x,t)\dx \leq 8\pi.$$
\end{theorem}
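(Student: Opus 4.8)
The plan is to reduce the whole statement to a single localized moment identity and to read off the threshold $8\pi$ from the quadratic (virial) part of a well-chosen test function, using the $N$-symmetry only to control the angular spread of the collapsing mass. First I would record the basic identity: solving $\Delta v+u=0$ by the Newtonian potential gives $\nabla v(x,t)=-\frac{1}{2\pi}\int_{\R^2}\frac{x-y}{|x-y|^2}\,u(y,t)\dy$, so that testing \rf{equ} against a function $\varphi$ and symmetrizing in $x\leftrightarrow y$ yields
\[
\frac{d}{dt}\int_{\R^2}\varphi\,u\dx=\int_{\R^2}\Delta\varphi\,u\dx-\frac{1}{4\pi}\iint_{\R^2\times\R^2}\frac{(\nabla\varphi(x)-\nabla\varphi(y))\cdot(x-y)}{|x-y|^2}\,u(x,t)u(y,t)\dx\dy.
\]
This is the ``bilinear integral involving derivatives of the fundamental solution of the Laplacian'' alluded to above, and the entire argument is an analysis of its right-hand side for $\varphi$ concentrated at the origin. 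I would also observe that, since $M/N$ is small, blowup can occur \emph{only} at the origin: a blowup point $p\neq0$ carries, by $N$-symmetry, a full orbit of $N$ distinct copies, and because a planar concentration point must absorb at least $8\pi$ of mass, this would force $M\geq 8\pi N$, contradicting $M/N<8\pi$. Hence the $\limsup$ in the statement genuinely captures all the mass that can collapse, and the solution stays bounded on a fixed punctured neighborhood of $0$ up to $\tbl$.

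To bound that mass I would take $\varphi_R(x)=|x|^2\,\eta(x/R)$ with a fixed radial cutoff $\eta\equiv1$ on the unit ball. Where $\eta\equiv1$ one has $\Delta\varphi_R=4$ and the bilinear kernel $\frac{(\nabla\varphi_R(x)-\nabla\varphi_R(y))\cdot(x-y)}{|x-y|^2}\equiv2$, so the ``bulk'' contribution of the mass $\mu(t)=\int_{\{|x|\le R\}}u\dx$ to the identity is \emph{exactly} $4\mu-\frac{1}{4\pi}\cdot2\mu^2=\frac{1}{2\pi}\,\mu\,(8\pi-\mu)$; thus if more than $8\pi$ were concentrated, the nonnegative localized second moment $\int\varphi_R u\dx$ would be driven downward at a definite rate, and integrating this up to $\tbl$ along a sequence $t_n\uparrow\tbl$, $R_n\downarrow0$ would contradict its nonnegativity. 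The large-$N$ hypothesis enters precisely in keeping this threshold sharp despite the cutoff: using $u(\rho^k x,t)=u(x,t)$ for the rotation $\rho$ by $2\pi/N$, in the bilinear term I may replace the kernel by its rotational average $\frac1N\sum_{k=0}^{N-1}K(x,\rho^k y)$ in $y$ (and symmetrically in $x$). For radial $\varphi_R$ this averaged kernel converges, as $N\to\infty$, to the purely radial kernel whose angular integration reproduces the exact radial computation underlying the $8\pi$ threshold, the discrepancy being an off-diagonal cross-sector term controlled by the mass in a single fundamental sector, i.e.\ by a quantity of order $M/N$. This is exactly the mechanism by which the known radial argument survives the passage to $N$-symmetric data: for $N$ large the interaction is a small perturbation of the radial one.

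The hard part, as usual in such collapse arguments, is not the virial computation but making the two error sources uniformly negligible as $t\to\tbl$ and $R\to0$: the cross-sector off-diagonal part of the symmetrized bilinear form, bounded by $O(M/N)$, and the ``annulus'' terms where $\eta$ varies, which must not leak a definite amount of mass into or out of $\{|x|\le R\}$ near the blowup time. Concretely one has to choose the localization scale $R$ and the sector number $N$ in the correct order relative to $\tbl-t$, verify that essentially all the mass in $\{|x|\le R\}$ is the collapsing core (using the exclusion of off-origin blowup to keep the background bounded), and quantify the angular equidistribution of the $N$-symmetric mass by $M/N$. I expect this simultaneous, uniform control of the off-diagonal bilinear contribution and of the boundary terms, so that the effective coefficient in the differential inequality is genuinely $8\pi$ rather than $8\pi$ plus an uncontrolled remainder, to be the crux of the whole proof.
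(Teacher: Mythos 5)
Your symmetrized moment identity and the bulk computation are correct: for $\varphi_R=|x|^2\eta(x/R)$ the region where $\eta\equiv 1$ contributes exactly $\frac{1}{2\pi}\mu(8\pi-\mu)$, which is the same $8\pi$ threshold that drives the paper's argument (there with the weight $\psi_R(x)=(1-|x/R|^2)_+^2$, cf. Lemma \ref{nonum}). The genuine gap is in the logic of your contradiction. A localized virial inequality is a \emph{blowup criterion}, not an obstruction to concentration: even granting full control of all error terms, from $0\le \int\varphi_{R_n}u(\cdot,t_n)\dx\le CMR_n^2$ and $\frac{{\rm d}}{\dt}\int\varphi_{R_n}u\dx\le -c\varepsilon$ you can only conclude that the solution ceases to exist within time $CMR_n^2/(c\varepsilon)$ after $t_n$, i.e. $\tbl-t_n\le CMR_n^2/(c\varepsilon)$. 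Since the negation of the theorem supplies sequences with $t_n\to\tbl$ and $R_n\to 0$ but no relation between $\tbl-t_n$ and $R_n$, this inequality is perfectly consistent with the hypothesized concentration of $8\pi+\varepsilon$: nothing forces the nonnegative moment to become negative \emph{before} the solution stops existing, because the solution does blow up. This is exactly why the paper does not --- and could not --- conclude from the virial inequality alone. What it proves instead is: (i) Proposition \ref{genbl}, a \emph{uniform} blowup-time bound $\tbl\le\gamma^2T(M,\varepsilon)$ whenever mass $8\pi+\varepsilon$ lies in a ball of radius $\gamma$, whose proof requires a delicate recursive induction on $\varepsilon$ precisely to handle the mass leaking into annuli (the issue you flag but defer); and (ii) Proposition \ref{symmass}, an $N$-symmetry-based estimate showing that mass in an annulus $\{\delta\le|x|\le R\}$ decays at most like $e^{-Ct}$ with $C$ \emph{independent of $R$}. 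The contradiction then comes from mass conservation, not from the moment's sign: one picks $S_j<R_j$ carrying exactly $8\pi+\varepsilon/4$ in the weighted sense, rescales by $S_j$, applies (i) to force blowup of the rescaled solution within a uniform time $T_{\rm w}(M,\varepsilon)$, and applies (ii) to keep the excess annular mass $\ge\beta\varepsilon/8$ alive until that time; scaling back, a fixed amount of mass sits in each of infinitely many disjoint annuli $\left\{\tfrac12\delta S_j\le|x|\le 3R_j\right\}$ as $t\to\tbl$, which is impossible for a solution of finite total mass. Your proposal contains no analogue of this accumulation mechanism, and without one the threshold computation cannot be closed into a proof.

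Two further points. First, you propose to ``choose the sector number $N$ in the correct order relative to $\tbl-t$,'' but $N$ is fixed by the hypothesis --- only the smallness of $M/N$ is available --- so any argument must be uniform in time for fixed $N$; this is what Proposition \ref{symmass} provides via sector decompositions and cancellations in the bilinear term, and your $O(M/N)$ heuristic, while in the right spirit, is used there for the spread estimate rather than to perturb the radial virial computation. Second, your exclusion of off-origin blowup (via the $8\pi$ lower bound at a blowup point and its $N$-fold orbit) is a reasonable observation --- it is close to how the paper deduces Corollary \ref{cor:symm} from the result of \cite{BZ} --- but it does not ``keep the background bounded'' in the way your error analysis needs: regularity away from the origin controls $\{|x|\ge\delta_0\}$ for fixed $\delta_0$, whereas the dangerous interactions are with mass at the intermediate scales $R_n\ll|x|\ll\delta_0$, which may still be concentrating toward the origin as $t\to\tbl$.
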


\par\noindent
In fact, $N$-symmetric solutions blow up with the concentration of mass {\it equal} to $8\pi$.

\begin{corollary}\label{cor:symm}
Under the assumptions of Theorem \ref{symm}, if moreover, $\int u_0(x) |x|^2\dx <\infty$, then
$$\lim_{R\to 0}\lim_{t \rightarrow \tbl}\int_{\{|x|\leq R\}} u(x,t)\dx =8\pi.$$
\end{corollary}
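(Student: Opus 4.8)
The plan is to reduce the statement to identifying the mass that the solution places at the origin in the blowup limit, and then to pin this number down by combining the upper bound of Theorem \ref{symm} with the local-in-time existence theory for measure data recalled in the Introduction. First I would use the extra hypothesis $\int u_0(x)|x|^2\dx<\infty$ to control the second moment: integrating \rf{equ} against $|x|^2$, integrating by parts and symmetrizing the drift term exactly as in the two-dimensional case of Theorem \ref{radi} gives
\[ \frac{d}{dt}\int_{\R^2}|x|^2 u(x,t)\dx = 4M-\frac{M^2}{2\pi}, \]
so that $\int_{\R^2}|x|^2 u(x,t)\dx$ stays finite and nonincreasing for $t<\tbl$. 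In particular the second moments are uniformly bounded, which makes the family of measures $\{u(\cdot,t)\dx\}_{t<\tbl}$ tight: no mass escapes to spatial infinity as $t\to\tbl$.

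Consequently, using the equation to obtain weak continuity in time, the measures $u(\cdot,t)\dx$ converge weakly-$*$ as $t\to\tbl$ to a finite nonnegative measure $\mu$ of total mass $M$. For each $R$ with $\mu(\partial\{|x|\le R\})=0$ one then has $\lim_{t\to\tbl}\int_{\{|x|\le R\}}u(x,t)\dx=\mu(\{|x|\le R\})$, and letting $R\to0$ shows that the iterated limit in the statement equals the atom $m^*:=\mu(\{0\})$. By the $N$-symmetry of $u$ and the fact that $u$ stays locally bounded away from the origin up to $\tbl$, the singular part of $\mu$ is carried solely by $\{0\}$, so $\mu=m^*\delta_0+f\dx$ with $0\le f\in L^1(\R^2)$. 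The Corollary is thus equivalent to the assertion $m^*=8\pi$.

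The bound $m^*\le 8\pi$ is immediate from Theorem \ref{symm}: for each fixed small $R$ the inner limit $\mu(\{|x|\le R\})=\lim_{t\to\tbl}\int_{\{|x|\le R\}}u\dx$ is controlled by the joint $\limsup$ appearing there, whence $m^*=\lim_{R\to0}\mu(\{|x|\le R\})\le 8\pi$. For the reverse inequality I would argue by contradiction. If $m^*<8\pi$, then every atom of the finite nonnegative measure $\mu$ has mass strictly below $8\pi$, so $\mu$ is an admissible initial datum for the local-in-time existence theory quoted in the Introduction (cf.\ \cite{BDP,BM,BZ}). The solution it generates, identified with $u$ for $t<\tbl$ by the uniqueness of nonnegative solutions, would extend $u$ to a strictly larger time interval, contradicting the maximality of $\tbl$. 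Hence $m^*\ge 8\pi$, and therefore $m^*=8\pi$.

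The main obstacle is this last step: turning the weak-$*$ limit $\mu$ into a legitimate initial condition and rigorously continuing the solution past $\tbl$. This requires genuine (not merely subsequential) weak-$*$ convergence $u(\cdot,t)\dx\rightharpoonup\mu$, for which I would lean on the tightness provided by the finite second moment together with the weak time-continuity furnished by the equation, and a matching argument guaranteeing that the solution started from $\mu$ truly prolongs $u$ rather than branching off. The remaining point to verify is that the only possible concentration point of an $N$-symmetric solution is the origin, so that $\mu$ has no other atoms; this should follow from local regularity estimates away from $\{0\}$, where the density stays bounded up to the blowup time.
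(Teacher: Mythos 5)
Your overall strategy---if less than $8\pi$ concentrates, the solution can be continued, contradicting blowup---is the same idea that drives the paper's proof, but the way you implement the contradiction has a genuine gap. You pass to the weak-$*$ limit measure $\mu$ at $t=\tbl$ and invoke the local existence theory for measures whose atoms are $<8\pi$. However, the solution launched from $\mu$ starts \emph{at} $t=\tbl$ from a singular measure (if $0<m^*<8\pi$ it has a genuine atom), so the concatenation of $u$ with that solution is not a classical solution in any neighbourhood of $t=\tbl$: it is measure-valued at that instant. Since $\tbl$ is defined as the maximal time of \emph{classical} existence, the existence of such a weak prolongation contradicts nothing---classical blowup at $\tbl$ is perfectly compatible with a measure-valued trace at $\tbl$ from which a new solution can be started (this is exactly what happens in the $8\pi$-quantization picture). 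Also, the phrase ``identified with $u$ for $t<\tbl$ by uniqueness'' is vacuous: $u$ and the solution emanating from $\mu$ do not share initial data, so uniqueness identifies nothing. What is missing is a \emph{quantitative, uniform-in-data} lower bound on the existence time, and this is precisely what the paper uses, namely the result of \cite{BZ}: if at some time $t_j<\tbl$ every ball of radius $\delta_1$ carries mass at most $8\pi-\e$, then the solution with the \emph{regular} datum $u(\cdot,t_j)$ exists for a further time $T(M,\delta_1,\e)>0$ independent of $j$. Choosing $t_j$ with $\tbl-t_j<T(M,\delta_1,\e)/2$ and using standard uniqueness for regular data extends $u$ classically past $\tbl$---a genuine contradiction. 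Your route can be repaired along these lines (weak-$*$ convergence to a $\mu$ with all atoms $<8\pi$ does imply the uniform small-mass hypothesis at times $t_j$ close to $\tbl$, for suitable $\delta_1,\e$), but once this tool is in hand the limit measure, the tightness argument and the second-moment control all become superfluous: the paper's proof never constructs $\mu$ at all, and works only with the solution at times before blowup.

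A secondary gap: your claim that the singular part of $\mu$ is carried by $\{0\}$ rests on ``local regularity estimates away from the origin,'' which is proved nowhere and is essentially equivalent to what needs to be shown. The paper obtains the corresponding statement from the $N$-symmetry alone and purely combinatorially: if $\int_{\{|x|\le\delta\}}u(x,t_j)\dx\le 8\pi-\e$, then \emph{every} ball of radius $\delta_1$ (with $\delta_1$ small enough) has mass at most $8\pi-\e$, because a ball centred at a point $x_0$ with $|x_0|>\delta/2$ has $N$ pairwise disjoint rotated copies of equal mass, hence mass at most $M/N$, which is below $8\pi-\e$ under the largeness assumption on $N$ in Theorem \ref{symm}; balls centred near the origin are handled by the hypothesis itself. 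Replacing your unproved regularity assertion by this symmetry argument is the cheap step that makes the paper's proof close.
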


 In the proof of Theorem \ref{symm}, we use   simple (but rather subtle) techniques of weight functions and scalings. The core of our analysis consists in  uniform (with respect to initial data) estimates on a blowup time (see  Proposition \ref{genbl}) and on the uniform spread (or decay) of mass for symmetric initial conditions (see Proposition~\ref{symmass}).
The proofs of these two propositions is much shorter  is the radially symmetric case, which we emphasize below.
Moreover, we use systematically the well-known rescaling of the system:
for each $\lambda>0$ and each solution $u$ of \rf{equ}--\rf{eqv} of mass $M$ the function 
\be
u_\lambda(x,t)=\lambda^2u(\lambda x,\lambda^2t)\label{scale}
\ee 
is also a solution, with its mass  equal to $M$. 

Corollary \ref{cor:symm} is  a direct consequence of Theorem \ref{symm} combined with results proved in \cite{BZ}, see the end of Section \ref{3}.

\begin{remark}\label{rem:Vel}
The authors of \cite{SSV} suggested how to construct solutions of the Keller--Segel system that blow up with  the {\em quantized} concentration of mass $M=16\pi$. 
In view of Theorem \ref{symm}, their data cannot be $N$-symmetric with large $N\gg 2$.
\end{remark}

\medskip

\subsection*{Notations.}
The integrals with no integration limits are understood as over the whole space $\R^d$: \  $\int=\int_{\R^d}$. The letter $C$ denotes various constants which may vary from line to line but they are independent of solutions. As usual, $\sigma_d=\frac{2\pi^{d/2}}{\Gamma(d/2)}$ denotes the area of the unit  sphere in $\R^d$.

\section{Proof of blowup of radial solutions}

We begin with two elementary observations which will be used in the proof of Theorem \ref{radi}.

\begin{lemma}\label{potential}
Let $u\in L^1_{\rm loc}(\R^d)$ be a radially symmetric function,  
 such that  $v=E_d\ast u$ with $E_2(x)=-\frac1{2\pi}\log|x|$ and $E_d(x)=\frac1{(d-2)\sigma_d}|x|^{2-d}$ for $d\ge 3$, solves the Poisson equation $\Delta v+u=0$. Then $$\nabla v(x)\cdot x=-\frac1{\sigma_d}|x|^{2-d}\int_{\{|y|\le |x|\}} u(y)\dy.$$
\end{lemma}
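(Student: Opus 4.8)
The plan is to exploit the radial symmetry of $u$ and use the divergence structure of the Poisson equation. Since $u$ is radially symmetric, so is $v$, and hence $\nabla v(x)$ is parallel to $x$; writing $v(x)=v(r)$ with $r=|x|$, we have $\nabla v(x)\cdot x = v'(r)\, r$. Thus it suffices to compute the radial derivative $v'(r)$ in terms of the mass of $u$ inside the ball of radius $r$.

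First I would integrate the Poisson equation $\Delta v + u = 0$ over the ball $\{|y|\le r\}$. Applying the divergence theorem to $\int_{\{|y|\le r\}}\Delta v\,\dy = \int_{\{|y|= r\}}\nabla v\cdot n\,\dS$, where $n$ is the outward unit normal, and using that $\nabla v\cdot n = v'(r)$ is constant on the sphere of radius $r$ by radial symmetry, the surface integral equals $v'(r)\cdot \sigma_d r^{d-1}$ (the sphere of radius $r$ has area $\sigma_d r^{d-1}$). Therefore $v'(r)\,\sigma_d r^{d-1} + \int_{\{|y|\le r\}}u(y)\dy = 0$, which gives
$$v'(r) = -\frac{1}{\sigma_d}\, r^{1-d}\int_{\{|y|\le r\}} u(y)\dy.$$

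To finish, I would multiply by $r$ to obtain $v'(r)\,r = \nabla v(x)\cdot x = -\frac{1}{\sigma_d}\, r^{2-d}\int_{\{|y|\le r\}} u(y)\dy$, which is exactly the claimed identity with $r=|x|$.

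The main obstacle is not the computation itself but its rigorous justification: one must confirm that $v=E_d\ast u$ is indeed differentiable in the classical sense and that the divergence theorem applies despite $u$ being only locally integrable. I would handle this by noting that the stated formula for $v'(r)$ can be derived directly from the explicit representation $v=E_d\ast u$ and the radial form of the Newtonian potential (so that $v'$ exists for a.e. $r$ and is absolutely continuous where $u$ is integrable), or equivalently by interpreting the integration of $\Delta v+u=0$ in the distributional sense and testing against radial cutoffs approximating the indicator of the ball; either route makes the surface-integral step legitimate.
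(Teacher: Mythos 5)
Your proof is correct and follows essentially the same route as the paper: both integrate the Poisson equation over the ball $\{|y|\le r\}$, apply the divergence (Gauss) theorem, and use radial symmetry to identify the surface integral with $\sigma_d r^{d-1}$ times the radial derivative of $v$. Your closing remarks on justifying the divergence theorem for merely locally integrable $u$ are a reasonable supplement that the paper leaves implicit.
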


\begin{proof}
By the Gauss formula, we have for the distribution function $M$ of $u$
$$M(R)\equiv \int_{\{|y|\le R\}} u(y)\dy=
-\int_{\{|y|=R\}} \nabla v(y)\cdot\frac{y}{|y|}{\rm d}S.$$ 
Thus,  for the radial function $\nabla v(x)\cdot\frac{x}{|x|}$ and $|x|=R$,  we obtain the identity
$$\nabla v(x)\cdot x=
\frac{1}{\sigma_d}R^{2-d}\int_{\{|y|=R\}}\nabla v(y)\cdot\frac{y}{|y|}{\rm d}S = -\frac1{\sigma_d}R^{2-d}M(R).$$
 \end{proof}

\begin{lemma}\label{MM}
If \ $\omega\in L^1_{\rm loc}(\R^d)$ is a radially symmetric function and $M(R)=\int_{\{|x|\le R\}} \omega(x)\dx$  --- its distribution function, then
$$
\int_{\{|x|\le R\}} \omega(x)M(|x|)\dx=\frac12M(R)^2.$$
\end{lemma}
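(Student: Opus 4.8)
The plan is to reduce the $d$-dimensional integral to a one-dimensional integral in the radial variable, where the asserted identity collapses to the elementary fact that $M'M=\tfrac12(M^2)'$. Writing $\omega(x)=\tilde\omega(|x|)$ and passing to polar coordinates, I would first record
\[
M(R)=\sigma_d\int_0^R\tilde\omega(r)\,r^{d-1}\,dr,
\]
which exhibits $M$ as an absolutely continuous function of $R$ on $[0,\infty)$ (the integrand is locally integrable in $r$ because $\omega\in L^1_{\rm loc}$), with $M(0)=0$ and $M'(r)=\sigma_d\tilde\omega(r)\,r^{d-1}$ for almost every $r>0$.

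Next I would rewrite the left-hand side, again by polar coordinates, as
\[
\int_{\{|x|\le R\}}\omega(x)M(|x|)\,dx=\sigma_d\int_0^R\tilde\omega(r)M(r)\,r^{d-1}\,dr=\int_0^R M'(r)M(r)\,dr.
\]
Since $M$ is absolutely continuous and continuous (hence locally bounded), the product $M^2$ is absolutely continuous with $(M^2)'(r)=2M(r)M'(r)$ almost everywhere, so the fundamental theorem of calculus gives
\[
\int_0^R M'(r)M(r)\,dr=\tfrac12\bigl(M(R)^2-M(0)^2\bigr)=\tfrac12\,M(R)^2,
\]
using $M(0)=0$. This is exactly the claim.

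The only delicate point — and it is a mild one — is that $\omega$ is merely $L^1_{\rm loc}$, so $M$ need not be $C^1$ and one cannot differentiate it naively. This is precisely where the absolute continuity of $M$ is used: it simultaneously justifies identifying $\sigma_d\tilde\omega(r)r^{d-1}$ with $M'(r)$ almost everywhere and licenses the application of the fundamental theorem of calculus to the absolutely continuous function $M^2$. With that observation in place, the remaining computation is routine.
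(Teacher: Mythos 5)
Your proof is correct and follows essentially the same route as the paper: pass to polar coordinates, identify $\sigma_d\tilde\omega(r)r^{d-1}$ with $M'(r)$, and integrate $M'M=\tfrac12(M^2)'$ from $0$ to $R$. The only difference is that you carefully justify the almost-everywhere differentiation and the fundamental theorem of calculus via absolute continuity, a point the paper's proof treats as implicit.
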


\begin{proof}
Since $\omega$ is radial, it satisfies for $|x|=R$ the equality $\omega(x)=\frac{1}{\sigma_d}R^{1-d}M'(R)$. Thus, using the polar coordinates, we obtain
\bea
\int_{\{|x|\le R\}}\omega(x)M(|x|)\dx&=&\sigma_d\int_0^R\frac1{\sigma_d}r^{1-d}M'(r)M(r)r^{d-1}\dr\nonumber\\
&=&\int_0^R M'(r)M(r)\dr=\frac12M(R)^2.
\nonumber
\eea
\end{proof}

\begin{proof}[Proof of Theorem \ref{radi}.] We will derive a differential inequality for the quantity
\be
w_R(t)=\int \psi_R(x)u(x,t)\dx\label{moment}
\ee
with the scaled weight function $\psi_R$ supported on the ball $\{|x|\le R\}$ 
\be
\psi(x)=(1-|x|^2)^2\un_{\{|x|\le 1\}}\ \ \ {\rm and\ \ \ }\psi_R(x)=\psi\bigg(\frac{x}{R}\bigg)\ \ \ {\rm with\ \ \ }R>0.\label{psi}
\ee
The function $\psi\in{\mathcal C}^1(\R^d)$ has piecewise continuous and bounded second derivatives 
\begin{equation}\label{psi'}
\begin{split}
&\nabla\psi(x)=-4x(1-|x|^2)\un_{\{|x|\le 1\}}=-4x\,\psi(x)^{1/2},\\
&\Delta\psi(x)=(-4d+4(d+2)|x|^2)\un_{\{|x|\le 1\}}.
\end{split}
\end{equation}
Observe that $\psi$ satisfies the relation
\be
\Delta\psi(x)\ge-\frac{(d+2)^2}{2}\psi(x), \label{lappsi}
\ee
which is seen from the elementary inequality for the quadratic polynomial 
$$-4d+4(d+2)s\ge -\frac{(d+2)^2}{2}(1-s)^2,$$
 equivalent to $\big(s-\frac{d-2}{d+2}\big)^2\ge 0$,  applied to  $0\le s\le 1$.

Now, using equation \rf{equ}, integrations by parts and applying relations \rf{psi}--\rf{lappsi},  we obtain
\bea
\frac{{\rm d}}{\dt}w_R(t)&=& \int \Delta\psi_R(x) u(x,t)\dx +\int u(x,t)\nabla v(x,t)\cdot \nabla\psi_R(x)\dx\nonumber\\ 
&\ge& R^{-2} \bigg(-\frac{(d+2)^2}{2}\int \psi_R(x)u(x,t)\dx - 4\int u(x,t)\big(\nabla v(x,t)\cdot x\big)(\psi_R(x))^{1/2}\dx\bigg). \!\!\!\!\!\!\!\!\!\!\!\!\!\!\!\!\!   \label{evol}
\eea
Thus, by Lemma \ref{potential}, we get
\bea
R^2 \frac{{\rm d}}{\dt}w_R(t)&\ge& -\frac{(d+2)^2}{2}w_R(t) +\frac{4}{\sigma_d}\int u(x,t)M(|x|,t)|x|^{2-d}\psi_R(x)^{1/2}\dx\nonumber\\
&\ge&  -\frac{(d+2)^2}{2}w_R(t) +\frac{4}{\sigma_d}R^{2-d} \int \psi_R(x)u(x,t)M(|x|,t)\dx,\label{evol2} 
\eea
because $\psi_R(x)=0$ for $|x|\ge R$, and $\psi_R(x)\le \psi_R(x)^{1/2}$.

Now, note that obviously 
$$
M(R,t)=\int_{\{|y|\le R\}}u(y,t)\dy\ge\int_{\{|y|\le R\}}\psi_R(y)u(y,t)\dy.$$
Hence, applying Lemma \ref{MM} to the radial function $\omega(x)=\psi_R(x)u(x,t)$, we obtain
$$
\int\psi_R(x)u(x,t)M(|x|,t)\dx\ge \frac12\left(\int\psi_R(x)u(x,t)\dx\right)^2.
$$
 Thus, as a consequence of inequality \rf{evol2}, we arrive at
\be
R^2 \frac{{\rm d}}{\dt}w_R(t)\ge -\frac{(d+2)^2}{2}w_R(t)+\frac2{\sigma_d}R^{2-d}w_R(t)^2. \label{moment-ev}
\ee
Now, it is clear from \rf{moment-ev} that if
$$
R^{2-d}w_R(0)>\frac{(d+2)^2}{2}\left/\right.\left(\frac2{\sigma_d}\right) =(d+2)^2\frac{\sigma_d}{4}\equiv {C_d},
$$
then $-\frac{(d+2)^2}{2}w_R(0)+\frac2{\sigma_d}R^{2-d}w_R(0)^2\equiv \delta>0$.
Since the right-hand side of \rf{moment-ev} is an increasing function of $w_R$, we have $\frac{{\rm d}}{\dt}w_R(t)\ge \delta>0$. As a consequence, the function  $w_R(t)$ becomes greater than $M=\int u(x,t)\dx$ in a finite time which is  a~contradiction with the existence of nonnegative, mass conserving solutions.

Finally, observe that  if $d=2$ the conditions $M>C_2\equiv 8\pi$,  $\xn u_0\xn>8\pi$ and  $w_R(0)>8\pi$ for $R>0$ sufficiently large are equivalent. 

Similarly, if $d\ge 3$, the conditions $\xn u_0\xn>C_d$ and $R^{2-d}w_R(0)>C_d$ for some $R>0$ are equivalent. 
\end{proof}

\section{Blowup with $8\pi$ concentration of mass}\label{3}

The proof of Theorem \ref{symm}, saying that a solution to problem \rf{equ}--\rf{ini} on the whole plane ${\mathbb R}^2$  with $M>8\pi$ concentrates at the origin with mass not exceeding  $8\pi$ at the blowup time, is based on two auxiliary results: on a uniform estimate of the blowup time (Proposition \ref{genbl}) and on a uniformly slow spread of mass over annuli in $\R^2$ (Proposition \ref{symmass}).

\medskip

\subsection{Uniform blowup time}

In the following proposition, we show that the blowup time of a solution to problem \rf{equ}--\rf{ini} can be estimated from above by a number which depends only on an amount of $u_0$ concentrated in the unit ball. 

\begin{proposition}\label{genbl}
Let $\varepsilon>0$ and $\gamma>0$ be arbitrary and fixed.
Suppose that $u=u(x,t)$ is a solution of problem \rf{equ}--\rf{ini} with an initial datum satisfying 
$$
0\le u_0\in L^1(\R^2)\quad  \text{and}\quad   
\int_{\{|x|\leq \gamma\}}u_0(x)\dx \geq 8\pi +\e.
$$  
Then $u(x,t)$ blows up in a finite time $t=\tbl  \leq \gamma^2 T(M, \e)$, where  $T(M,\e)>0$ depends  on $M=\int u_0(x)\dx>8\pi$ and  $\e$, only.
\end{proposition}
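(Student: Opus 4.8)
The plan is to exploit the same kind of local-moment differential inequality that drove the proof of Theorem \ref{radi}, but now quantitatively, so that one reads off not merely that blowup occurs, but an explicit upper bound on the blowup time depending only on $M$ and $\e$. The essential idea is that the hypothesis $\int_{\{|x|\le\gamma\}}u_0(x)\dx\ge 8\pi+\e$ says exactly that a weighted local moment of $u_0$ over the ball of radius $\gamma$ exceeds the critical threshold $8\pi$ by a definite margin; by the scaling \rf{scale}, it suffices to treat the case $\gamma=1$, since replacing $u$ by $u_\lambda$ with $\lambda=\gamma$ rescales both the ball and the blowup time by the expected powers, yielding the factor $\gamma^2$ in front of $T(M,\e)$.

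\medskip

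First I would reduce to $\gamma=1$ via \rf{scale}: if $v(x,t)=\gamma^2 u(\gamma x,\gamma^2 t)$, then $v$ has the same mass $M$, satisfies $\int_{\{|x|\le 1\}}v(x,0)\dx=\int_{\{|x|\le\gamma\}}u_0(x)\dx\ge 8\pi+\e$, and blows up at time $\tbl^v=\gamma^{-2}\tbl^u$; so a bound $\tbl^v\le T(M,\e)$ gives $\tbl^u\le\gamma^2 T(M,\e)$. Next, with $\gamma=1$, I would track the local moment $w_1(t)=\int\psi_1(x)u(x,t)\dx$ from \rf{moment} with $R=1$, whose evolution is governed by the differential inequality \rf{moment-ev}, namely (for $d=2$, so $\sigma_2=2\pi$)
\be
\frac{{\rm d}}{\dt}w_1(t)\ge -8\,w_1(t)+\frac1{\pi}w_1(t)^2. \label{plan-ev}
\ee
The critical value is $w=8\pi$: the right-hand side is positive precisely when $w_1>8\pi$, and grows quadratically thereafter. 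The difficulty is that the hypothesis controls $\int_{\{|x|\le 1\}}u_0$, not $w_1(0)$ itself, and since $\psi_1\le 1$ one only gets $w_1(0)\le\int_{\{|x|\le1\}}u_0$, the wrong direction; so one cannot simply assert $w_1(0)>8\pi$.

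\medskip

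\textbf{The main obstacle} is therefore bridging this gap between the mass in the ball and the weighted moment, and doing so with constants depending only on $M$ and $\e$. The strategy I would pursue is to allow a short waiting time: even if $w_1(0)$ starts slightly below $8\pi$, the excess mass $\e$ guarantees that the solution is genuinely supercritical, and one should be able to show that within a controlled time interval the weighted moment is driven above $8\pi+\e/2$ (say), after which \rf{plan-ev} forces $w_1$ to increase at a rate bounded below by a positive constant depending on $M$ and $\e$. Once $w_1(t_0)\ge 8\pi+\e/2$ at some time $t_0=t_0(M,\e)$, integrating the autonomous inequality $\frac{{\rm d}}{\dt}w_1\ge\frac1\pi(w_1-8\pi)w_1$ shows $w_1$ reaches $M$ — the upper bound dictated by mass conservation and $\psi_1\le 1$ — within an additional time depending only on $M$ and $\e$, contradicting the existence of a mass-conserving nonnegative solution. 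Summing $t_0$ and this integration time yields $T(M,\e)$. To execute the waiting-time step one may either use a crude lower bound showing the mass in the ball cannot instantly escape (so that $\int_{\{|x|\le1\}}u$ stays above $8\pi+\e/2$ for a short controlled interval, during which a suitable cutoff comparison upgrades to a lower bound on $w_1$), or, more cleanly, work with a family of weights $\psi_R$ for $R$ slightly larger than $1$ and combine the resulting inequalities; in either case the quantitative lower bound on the time to surpass $8\pi$ is the delicate point, and I expect it to require the margin $\e$ in an essential way.
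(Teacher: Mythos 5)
Your reduction to $\gamma=1$ via the scaling \rf{scale} is correct and is exactly the paper's Remark \ref{rem:scal}, and your fix for the mismatch between $\int_{\{|x|\le 1\}}u_0\dx$ and the weighted moment (take the weight scale $R$ large relative to the ball carrying the mass, so that $w_R(0)\ge (1-1/R^2)^2(8\pi+\e)\ge 8\pi+\e/2$) is what the paper does in Remark \ref{rem:rad}. But that is the easy part, and your argument hinges on a step that is not available: you assert that the evolution of $w_1$ (or $w_R$) ``is governed by the differential inequality \rf{moment-ev}''. That inequality was derived in the paper only for \emph{radially symmetric} solutions: its proof uses Lemma \ref{potential} (Newton's shell identity $\nabla v(x)\cdot x=-\frac{1}{\sigma_d}|x|^{2-d}M(|x|)$, which encodes that outer mass exerts no force inside a ball, and which is false without radial symmetry) and Lemma \ref{MM} (which also requires a radial density). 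Proposition \ref{genbl} makes no symmetry assumption at all --- and it cannot, since in the proof of Theorem \ref{symm} it is applied to $N$-symmetric, non-radial rescaled solutions. For a general solution the derivative of $w$ contains the symmetrized bilinear term
$$
-\frac{1}{4\pi}\iint u(x,t)u(y,t)\,\frac{(x-y)\cdot\big(\nabla\psi(x)-\nabla\psi(y)\big)}{|x-y|^2}\dx\dy ,
$$
and when $x$ lies inside the support of $\psi$ while $y$ carries mass just outside it, this term has no favorable sign: mass sitting outside the ball attracts interior mass toward the boundary and \emph{decreases} $w$, by an amount proportional to the product of the interior mass and the nearby exterior mass. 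No ``waiting time'' recovers \rf{moment-ev} from the hypotheses, because that exterior mass can in principle persist up to the putative blowup time and spoil the quadratic lower bound throughout.

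This is precisely what the paper identifies as the main difficulty (``controlling a large mass of a solution which is outside of the unit ball'', Remarks \ref{rem1}--\ref{rem3}), and its proof is organized around it. Lemma \ref{nonum} establishes the lower bound $\frac{{\rm d}}{\dt}w(t)\ge\e/100$ only \emph{conditionally}, as long as the mass in the annulus $\{\eta\le|x|\le\lambda\}$ stays below $\e\alpha$; when that condition first fails, at a time $T_1$, the mass in the larger ball $\{|x|\le\lambda\}$ is shown to exceed $8\pi+\e(1+\eta^2\alpha/2)$, i.e.\ the excess over $8\pi$ has strictly increased. The paper then runs a recursion on this excess (Claims 1 and 2, glued together by a continuity/infimum argument on $\e_0$, with a rescaling at each step), which closes because the excess can never exceed $M-8\pi$. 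Your proposal contains none of this machinery, and without it --- or some substitute control of the inside--outside interactions for non-symmetric data --- what you have proves the proposition only in the radially symmetric case, which the paper itself dispatches in a few lines in Remark \ref{rem:rad}.
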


\begin{remark}\label{rem:scal}
We introduce the parameter $\gamma>0$ in Proposition \ref{genbl} to simplify the notation in the proof.
In fact, assuming the property stated in the proposition  for $\gamma=1$, we obtain immediately this property for each other $\gamma>0$ by the rescaling $u\mapsto u_{\gamma^{-1}}=
\gamma^{-2} u(\gamma^{-1}x, \gamma^{-2} t)$.
\end{remark}

\begin{remark}\label{rem1}
Observe that for an initial condition $u_0$  with its support in the unit ball,  this proposition   holds true by the  standard second moment argument 
(cf. e.g. \cite{B-CM,BDP}) based on the identity 
$$
\frac{\rm d}{\dt}\left(\int|x|^2u(x,t)\dx\right)=\frac{1}{2\pi}M(8\pi-M)<0,
$$
which implies that a nonnegative solution $u(x,t)$ ceases to exist at a moment of time estimated from above by the number 
$2\pi \big(M(M-8\pi)\big)^{-1}\int |x|^2u_0(x)\dx$. 
Now, it suffices to notice that $\int|x|^2u_0(x)\dx\le M$ for ${\rm supp\,}u_0\subset \{|x|\le 1\}$ and choose $\e=M-8\pi$.
\end{remark}

\begin{remark}\label{rem:rad}
Proposition \ref{genbl}  has been already proved  in this paper in the radially symmetric case.
Indeed, it is sufficient  to apply inequality \rf{moment-ev} with $d=2$
and a suitable $R>0$:
\begin{equation} \label{wR2}
R^2 \frac{{\rm d}}{\dt}w_R(t)\ge -8w_R(t)+\frac1{\pi}
w_R(t)^2
\end{equation}
to the function $w_R$ defined by relations \rf{moment}--\rf{psi}.
We use this inequality with  $R=2^{1/2}(2+16\pi/\varepsilon)^{1/2}$.
By a direct calculation using the assumption on $u_0$, we obtain
$$
w_R(0)\ge \left(1-\frac{1}{R^2}\right)^2\int_{\{|x|\le 1\}}u_0(x)\dx 
\ge 
\left(1-\frac{1}{R^2}\right)^2(8\pi+\e)\ge 8\pi+\frac{\e}{2}.
$$ 
Thus, analogously as at the end of the proof of Theorem \ref{radi}, the function $w_R(t)$ becomes greater than $M$ in a finite time which can be estimated from above by a quantity depending on $M$ and $\e$, only.
\end{remark}

\begin{remark} \label{rem3}
We cannot directly apply the local moment method developed in \cite{BCKZ} to show Proposition \ref{genbl}  for general initial conditions, analogously as in the radial case discussed in Remark \ref{rem:rad}. 
This is due to the fact that blowup results in \cite{BCKZ} are proved for each initial datum $u_0$ such that $M=\int u_0(x)\dx >8\pi $ which, moreover, has a small mass outside a ball.
In fact, by methods of this work, we can remove that extra assumption from results proved in \cite{BCKZ}.
\end{remark}

One may summarize Remarks \ref{rem1}--\ref{rem3} by saying that the main problem in proving Proposition \ref{genbl} consists in controlling a large mass of a solution which is outside of the  unit ball.  
To show this proposition,  we study (as in the previous section)  the time evolution of the function
$$
w(t)=w_1(t)=\int \psi (x)u(x,t)\dx,
$$
and a solution $u(x,t)$ blows up at certain $\tbl$ if there exists $T\geq \tbl$ such that $w(T)=M$. 
Here, besides inequalities \rf{psi'}--\rf{lappsi}, we will use the following elementary estimates for the weight function $\psi=\psi_1(x)=(1-|x|^2)^2\un_{\{|x|\le 1\}}$:
\bea
|\psi(x)-1| &\leq& B|x|^2,\label{enjj}\\
|\nabla \psi(x)-\nabla \psi(y)+4(x-y)| &\leq & B\delta |x-y| {\rm  \ \ for\  all\ \  } |x|,\, |y|\leq \delta, \label{enjjj}\\
|(x-y)\cdot(\nabla \psi(x)-\nabla \psi(y))|&\leq & B\min\{|x-y|^2,|x-y|\},\label{en-}
\eea
valid for each fixed constant $0<\delta<1$, some constant $B\ge 1$ independent of $\delta$, and all $x$, $y\in\R^2$.

First, let us prove an auxiliary result concerning the function $w(t)$.

\begin{lemma}\label{nonum}
Given $\e \in (0, M-8\pi ]$, define the parameters
\begin{equation}\label{gal}
\eta=\eta(\e)=\frac{\e}{100M^2B}, \quad 
\alpha=\frac{1}{100MB}, \quad 
 \lambda=\lambda(\e)=\frac{100M^2B}{\e}+1,
\end{equation}
where $B\ge 1$ is a constant satisfying \rf{enjj}--\rf{en-}.
Assume that
\be
w(0)\geq 8\pi +\frac\e{2}.\label{startll}
\ee
Suppose that there exists $T\in (0,\tbl]$ such that for all $t\in [0,T)$ we have the estimate 
\be
\int_{\{\eta\leq |x|\leq \lambda\}}u(x,t)\dx < \e\alpha.\label{star}
\ee
Then, for all $t \in [0,T] $, the inequality  $\frac{{\rm d}}{\dt}w(t)\ge \frac1{100}\e$ holds true.
In particular,  the estimate \  $T\leq 100 M/\e$ follows.
\end{lemma}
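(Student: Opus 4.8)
The plan is to convert the evolution of $w(t)=\int\psi(x)u(x,t)\dx$ into a differential inequality of the same shape as \rf{wR2}, but now valid \emph{without} radial symmetry, and then to run a continuation argument. I would start from the identity contained in the first line of \rf{evol} with $R=1$,
\[
\frac{{\rm d}}{\dt}w(t)=\int\Delta\psi\,u\dx+\int u\,\nabla v\cdot\nabla\psi\dx ,
\]
and, since Lemma \ref{potential} is unavailable here, I would treat the nonlinear term by the standard symmetrization: writing $\nabla v(x)=-\frac1{2\pi}\int\frac{x-y}{|x-y|^2}u(y)\dy$ and averaging with the transposition $x\leftrightarrow y$ gives
\[
\int u\,\nabla v\cdot\nabla\psi\dx=-\frac1{4\pi}\int\!\!\int u(x)u(y)\,\frac{(x-y)\cdot(\nabla\psi(x)-\nabla\psi(y))}{|x-y|^2}\dy\dx .
\]
For the linear term I would use \rf{lappsi} with $d=2$, i.e.\ $\Delta\psi\ge-8\psi$, so that $\int\Delta\psi\,u\dx\ge-8w(t)$. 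The whole task then reduces to showing that the symmetrized double integral is bounded below by essentially $\frac1{\pi}w(t)^2$, up to errors that are small multiples of $\e$.

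To this end I would split $\R^2$ into $\Omega_1=\{|x|<\eta\}$, $\Omega_2=\{\eta\le|x|\le\lambda\}$, $\Omega_3=\{|x|>\lambda\}$, with masses $m_1,m_2,m_3$, and decompose the double integral into the nine products $\Omega_i\times\Omega_j$. The dominant block is $\Omega_1\times\Omega_1$: there \rf{enjjj} with $\delta=\eta$ gives $(x-y)\cdot(\nabla\psi(x)-\nabla\psi(y))=-4|x-y|^2+r(x,y)$ with $|r|\le B\eta|x-y|^2$, so this block equals $\frac1{\pi}m_1^2$ up to an error $\le\frac{B\eta}{4\pi}M^2$, which is $O(\e)$ by the choice of $\eta$ in \rf{gal}. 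Since $\psi\le1$ is supported in the unit ball and $\lambda>1$, the set $\Omega_3$ does not contribute to $w$ itself, and combining this with the smallness \rf{star} I would record the elementary bound $m_1\ge w(t)-\e\alpha$.

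The remaining eight blocks are error terms, and estimating them is the main (though routine, once the weight estimates are in hand) obstacle. Each involves a point in $\Omega_2\cup\Omega_3$, and \rf{en-} yields the universal kernel bound $|(x-y)\cdot(\nabla\psi(x)-\nabla\psi(y))|/|x-y|^2\le B$. Blocks meeting $\Omega_2$ are controlled by the small annulus mass: their total is $\le\frac{1}{2\pi}m_2MB<\frac{1}{2\pi}\e\alpha MB$, which is $O(\e)$ by the choice of $\alpha$. For $\Omega_1\times\Omega_3$ (and its transpose) I would use that $\nabla\psi\equiv0$ on $\Omega_3$ while $|x-y|>\lambda-\eta$, so the kernel is $O(1/\lambda)$; the block $\Omega_3\times\Omega_3$ vanishes identically. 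The definitions \rf{gal} are calibrated precisely so that the sum of all these errors is a small fraction of $\e$.

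Collecting the estimates, I obtain, as long as $w(t)\ge8\pi+\frac\e2$,
\[
\frac{{\rm d}}{\dt}w(t)\ge-8w+\frac1{\pi}(w-\e\alpha)^2-O(\e)\ge\frac{w(w-8\pi)}{\pi}-O(\e)\ge4\e-O(\e)\ge\frac1{100}\e,
\]
using $w\ge8\pi$ and $w-8\pi\ge\frac\e2$ in the first factor and $w\le M$ to absorb the $\e\alpha$ cross term. To discharge the a priori hypothesis $w(t)\ge8\pi+\frac\e2$ I would run a bootstrap: it holds at $t=0$ by \rf{startll}, and the displayed inequality shows $w$ is strictly increasing wherever it holds, so a continuity argument extends it to all of $[0,T]$. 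Hence $\frac{{\rm d}}{\dt}w\ge\frac1{100}\e$ throughout $[0,T]$; integrating and using $w(T)\le M$ gives $\frac1{100}\e\,T\le w(T)-w(0)\le M$, that is $T\le100M/\e$.
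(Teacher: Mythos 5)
Your proposal is correct and follows essentially the same route as the paper's own proof: symmetrize the nonlinear term, split $\R^2$ into the ball $\{|x|<\eta\}$, the annulus $\{\eta\le|x|\le\lambda\}$ and the exterior $\{|x|>\lambda\}$, use \rf{enjjj} for the inner-inner block and \rf{en-} plus the smallness \rf{star} (and the largeness of $\lambda$) for the error blocks, then combine with \rf{lappsi} and run the continuity/bootstrap argument before integrating to get $T\le 100M/\e$. The only cosmetic difference is that you enumerate all nine blocks explicitly (noting that the exterior-exterior block vanishes since $\nabla\psi\equiv 0$ there), whereas the paper treats the symmetric blocks by the substitution $x\leftrightarrow y$; the quantitative accounting is the same.
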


\begin{proof}
Applying inequality \rf{en-} and then assumption \rf{star}, we have for our choice of $\eta$, $\alpha,\lambda$
\be
\int_{\{|y|\leq\eta\}}\int_{\{\eta \leq|x|\leq \lambda\}}
u(x,t)u(y,t)\frac{\big|(x-y)\cdot(\nabla \psi(x)- \nabla \psi(y)  )\big|}{|x-y|^2}\dx\dy\leq
BM\e\alpha= \frac{\e}{100},
\ee
and again by \rf{en-} since $\eta<1$
\be
\int_{\{|y|\leq\eta\}}\int_{\{\lambda \leq|x|\}}
u(x,t)u(y,t)\frac{\big|(x-y)\cdot(\nabla \psi(x)- \nabla \psi(y))\big|}{|x-y|^2}\dx\dy\leq
\frac{M^2B}{\lambda-1}= \frac{\e}{100}.
\ee
Moreover, by \rf{enjjj} and elementary calculations
{
\bea
&&\int_{\{|y|\leq\eta\}}\int_{\{|x|\leq\eta\}}
u(x,t)u(y,t)\frac{(x-y)\cdot(\nabla \psi(x)- \nabla \psi(y))}{|x-y|^2}\dx\dy \nonumber\\ 
&\ge& w(t)^2-2w(t)\int_{\{\eta\le|x|\le 1\}} \psi(x)u(x,t)\dx-\frac1{25}\e\nonumber\\
&\ge &w(t)^2-2M\e\alpha-\frac1{25}\e\nonumber\\
&=&w(t)^2-\frac1{50}\e-\frac1{25}\e\ge w(t)^2-\frac1{10}\e.\nonumber
\eea
}
Hence, repeating the above estimate with $x$ replaced by $y$,  and using \rf{lappsi} with $d=2$, we obtain
{ 
\bea
\frac{{\rm d}}{\dt}w(t)
&=&\int\psi(x)\Delta u(x,t)\dx-\frac{1}{2\pi}\iint u(x,t)u(y,t)\frac{x-y}{|x-y|^2}\cdot\nabla\psi(x)\dx\dy\nonumber\\
&=&\int \Delta\psi(x)u(x,t)\dx-\frac{1}{4\pi}\iint  u(x,t)u(y,t)\frac{x-y}{|x-y|^2}\dx\dy \cdot\left(\nabla\psi(x)-\nabla\psi(y)\right)\dx\dy\nonumber\\ 
&\geq& -8w(t)+\frac{1}{\pi}\left(w(t)^2 -\frac\e{10}\right)\nonumber\\
&=& \frac1\pi w(t)\big(w(t)-8\pi\big)-\frac{1}{10\pi}\e\ge \frac1{2\pi}\e-\frac{1}{10\pi}\e\ge \frac{1}{4\pi}\e\nonumber 
\eea
}
as long as $w(t)$ is increasing. Now, Lemma \ref{nonum} follows   by assumption \rf{startll}.

Since we cannot have the estimate  $w(T)> M$, integrating the differential inequality  $\frac{{\rm d}}{\dt}w(t)\ge \frac1{100}\e$ we obtain the upper bound  $T\leq 100M/\e$. 
\end{proof}

Now, we define a certain property of problem \rf{equ}--\rf{ini}. 

\begin{definition}\label{Iepsilon}
Fix $\e>0$ and $\gamma>0$.
Problem \rf{equ}--\rf{ini} is said to have {\it the property $\mathcal I_\e$} if each of its solutions corresponding to an initial datum satisfying 
\begin{equation}\label{Ie0}
0\le u_0\in L^1(\R^2)\quad  \text{and}\quad
\int_{\{|x|\leq \gamma\}}u_0(x)\dx\geq 8\pi +\e 
\end{equation}
blows up not later than at  time $\gamma^2 T(M, \e)$, with the parameter $T(M,\e)>0$ depending on $M=\int u_0(x)\dx>8\pi$ and $\e$, only. 
\end{definition}

Let us first notice elementary facts concerning the property $\mathcal I_\e$.

\begin{remark}\label{rem:Ie}
The parameter $\gamma>0$ can be easily removed from this definition  by the usual rescaling, cf. Remark \ref{rem:scal}.
Problem \rf{equ}--\rf{ini} has the property $\mathcal I_{M-8\pi}$, because then assumptions \rf{Ie0} with $\e= M-8\pi$ mean that $u_0$ is supported on the ball of radius $\gamma$. 
Hence, it suffices to apply Remark \ref{rem1}.
Obviously, there is no solution satisfying conditions \rf{Ie0} for $\e> M-8\pi$. 
It is also easy to show that if problem \rf{equ}--\rf{ini} has the property $\mathcal I_{\e}$, then it has the property $\mathcal I_{\tilde\e}$ for each $\tilde\e>\e$.
\end{remark}

\begin{proof}[Proof of Proposition \ref{genbl}.]
By Definition \ref{Iepsilon}, it suffices to show that problem \rf{equ}--\rf{ini} has   the property $\mathcal I_\e$ for all $\e>0$ and with a suitably chosen  $\gamma>0$. 
To do this, we are going to prove the following two claims for each $\e>0$ with  parameters $\alpha=\alpha(\e)$, $\eta$, and $\lambda(\e)$ defined in \rf{gal}.
\begin{itemize}
\item{\it Claim 1.} Suppose that $\e$ and $M$ satisfy the conditions
\begin{equation} \label{c1}
8\pi +\e (1+\eta^2\alpha/2)>M.
\end{equation}
Then problem \rf{equ}--\rf{ini}  has the property $\mathcal I_{\e}$.

\item{\it Claim 2.}  Suppose that 
\begin{equation} \label{c2}
8\pi +\e (1+\eta^2\alpha/2)\leq M,
\end{equation}
and problem \rf{equ}--\rf{ini} has the property $\mathcal I_{\e(1+\eta^2 \alpha/2)}$.
Then problem \rf{equ}--\rf{ini} has the property $\mathcal I_{\e}$.
\end{itemize}

Let us first prove that Claims 1 and 2 imply the property $\mathcal I_{\e}$ for all $\e>0$.
Obviously, inequality \rf{c1} holds true for $\e=M-8\pi$. Thus, since $ \alpha=\alpha(\e)$ is a continuous function of $\e$, inequality \rf{c1} holds true for all $\e\in (\e_0, M-8\pi]$ with some $\e_0<M-8\pi$, and problem \rf{equ}--\rf{ini} has the property $\mathcal I_{\e}$ in this range of $\e$ by Claim 1. Recalling  Remark \ref{rem:Ie}, define
$$
\e_0=\inf \{\e>0\;:\; \; \text{problem \rf{equ}--\rf{ini} has the property $\mathcal I_{\e}$}\},
$$
and suppose that $\e_0>0$. By continuity, there exists $\e_1>0$, such that 
$\e_1(1+\eta^2 \alpha(\e_1)/2)=\e_0$. For every $\tilde \e\in (\e_1, \e_0)$, we have the alternative: either $\tilde \e$ satisfies inequality \rf{c1} or inequality \rf{c2}. In both cases, either by Claim 1 or Claim 2,  problem  \rf{equ}--\rf{ini} has the property $\mathcal I_{\tilde \e}$. This is a contradiction with the definition $\e_0$ because $\tilde \e<\e_0$.

Now, we prove both Claims 1 and 2 simultaneously, and the scheme of the proof is the following. If assumption \rf{c1} is satisfied and if estimate \rf{star}  holds true for all $t\in [0,\tbl)$, the proof of Claim 1 is completed by Lemma \ref{nonum}.
At the first point $t=T_1$, where estimate \rf{star}  fails, we obtain inequality \rf{c2}. 
Hence, using the recurrence hypothesis of Claim 2 and a suitable rescaling of the whole problem, we obtain Claim 2.

Fix $\e \in (0,M-8\pi)$. Let $\eta=\eta(\e)$, $\alpha$, $\lambda(\e)$ be defined by \rf{gal}.
Set
\begin{equation}\label{gamma}
\gamma^2 = \frac{\alpha \eta^2\e}{2MB}
\end{equation}
and notice that $\gamma^2=\gamma^2(\e)\le 1$.
Suppose that $u_0$ satisfies conditions \rf{Ie0} with this value 
of $\gamma$.  
Thus, using inequality  \rf{enjj},  we obtain 
\begin{equation}\label{w(0)}
\begin{split}
w(0)&\ge \int_{\{|x|\le\gamma\}}\psi(x)u_0(x)\dx \\
&\ge \int_{\{|x|\le\gamma\}} u_0(x)\dx-\int_{\{|x|\le\gamma\}} |1-\psi(x)|u_0(x)\dx\\
&\ge 8\pi+\e- B\int_{\{|x|\le\gamma\}}|x|^2u_0(x)\dx\\
&\ge 8\pi+\e-B\gamma^2 M.
\end{split}
\end{equation}
Notice that, with our choice of $\gamma$ in \rf{gamma}, we have $MB\gamma^2<\e /2$, thus, we obtain the inequality $w(0)>8\pi +\e/2$, which is the first assumption \rf{startll} of Lemma \ref{nonum}.

Next, we deal with the second assumption \rf{star}  of Lemma \ref{nonum}.
Notice that if estimate \rf{star} holds true for all $t\in [0, \tbl)$ then,  by Lemma \ref{nonum}, we have the property $\mathcal I_{\e}$ with $\gamma$ defined in \rf{gamma}.

Suppose  that estimate  \rf{star} does not hold for $t=0$. Then, by assumption \rf{Ie0} and the inequalities $\gamma<\eta$ and $\eta^2\leq 1/2$, 
we obtain 
\begin{equation*}
\begin{split}
\int_{\{|x|\le\lambda\}}u_0(x)\dx
&=\int_{\{|x|\le\gamma\}}u_0(x)\dx +\int_{\{\gamma\le |x|\le\lambda\}}u_0(x)\dx\\
&\ge 8\pi+\e+\e\alpha\ge 8\pi+\e(1+\eta^2\alpha/2).
\end{split}
\end{equation*}
Notice that this inequality cannot be true under the condition \rf{c1} of Claim 1, because then the total mass  of $u_0$ would be greater than $M$.
Thus, we have inequality \rf{c2} assumed in Claim 2.  Suppose that the second assumption of Claim 2 is satisfied, namely, that each solution of  problem \rf{equ}--\rf{ini} with an initial datum satisfying \rf{Ie0} with $\e$ replaced by $\e (1+\eta^2 \alpha/2)$  blows up at time estimated from above by $ \lambda^2 T(M, \e (1+\eta^2 \alpha/2))$. Now, we rescale the solution as explained in Remark~\ref{rem:scal},  to see that it suffices to  choose $T(M,\e) =\gamma^{-2} \lambda^2 T(M,\e (1+\eta^2\alpha/2))$.
Since $\gamma$, $\lambda$ depend only on $M$, $\e$, we obtain the property $\mathcal I_{\e}$.

Now, consider the case when assumption \rf{star} of  Lemma \ref{nonum} is not satisfied for some $t\in (0, \tbl)$.
Thus, by continuity,  there exists $T_1\in (0,\tbl)$ such that strict inequality \rf{star} is satisfied for all $t\in [0, T_1)$, and for $t=T_1$ we have
\be
\int_{\{\eta\leq |x|\leq \lambda\}}u(x,T_1)\dx =
\e\alpha. \label{annulus}
\ee
Hence, by Lemma \ref{nonum}, the function $w(t)$ is increasing for $t\le T_1$, 
and by \rf{w(0)} we obtain 
\be
w(T_1)=\int \psi(x)u(x,T_1)\dx\ge w(0)\ge 8\pi+\e-\gamma^2MB.\label{two}
\ee
Now,  the estimate  
$$\un_{\{|x|\le \lambda\}}\ge \psi(x)+  \big(1-(1-\eta^2)^2\big)\un_{\{\eta\le|x|\le \lambda\}}
\ge 
\psi(x)+  \eta^2 \un_{\{\eta\le|x|\le \lambda\}}
$$ 
implies 
\bea
\int_{\{|x|\leq \lambda\}}u(x,T_1)\dx 
&\geq& \int\psi(x)u(x,T_1)\dx+\eta^2 \int_{\{\eta<|x|<\lambda\}} u(x,T_1)\dx\nonumber\\
&\geq& 8\pi +\e -\gamma^2 BM +\eta^2 \e \alpha
\ge 8\pi +\e(1+\alpha\eta^2/2), \nonumber
\eea
because $\gamma^2BM\leq \eta^2 \e \alpha/2$.

Notice again, as above, that condition \rf{c1} of Claim 1 cannot be true.
Thus, if we assume both conditions of Claim 2, then problem \rf{equ}--\rf{ini} with the initial condition $u(x,T_1)$ blows up not later than at time  $t= \lambda^2T (M, \e (1+\alpha\eta^2/2))$.
Then, problem \rf{equ}--\rf{ini}  with the initial condition $u_0$ blows up at time estimated from above by 
$$ 
\tilde T(M,\e)= T_1 +\lambda^2 T(M,\e (1+\alpha \eta^2/2))\leq 100M/\e +\lambda^2 T(M, 
\e (1+\alpha\eta^2/2)). 
$$
Now, it suffices to rescale the problem by choosing  $T(M,\e)=\gamma^{-2} \tilde T(M,\e)$. 
\end{proof} 

\medskip

%%%%%%%%%%%%%%%%%%%%%%%%%%%%
\subsection{Uniform spread of  mass in the cases  of  $N$-symmetry of initial data}

We prove our second auxiliary result for $N$-symmetric nonnegative solutions with  sufficiently large $N$.

 \begin{proposition}  \label{symmass}
Let $0\le u_0\in L^1(\R^2)$ and $\int_{\{\delta \leq|x|\leq R\}}u_0(x)\dx \geq \e$ for some $\e>0$ and $0<\delta<R/4<\infty$. If $u_0$ is  an $N$-symmetric function  (see \rf{sym}) where the integer $N>0$ is  sufficiently large, i.e. $N\ge cM$ with a constant $c>0$ independent of $M$ and of $\delta$, then the solution $u$ of  problem \rf{equ}--\rf{ini}, as long as this exists,  satisfies for $t>0$
$$
\int_{\{\frac12\delta \leq|x|\leq 3R \}}u(x,t)\dx \geq \e\exp(-Ct),$$
where $C=C(M, \e, \delta)>0$ depends only on $M$, $\delta$, $\e$, 
and is independent of  $R$.
\end{proposition}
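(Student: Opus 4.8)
The plan is to track the weighted mass $m(t)=\int\phi(x)u(x,t)\dx$ for a fixed radial cut-off $\phi$ adapted to the annulus and to derive a linear differential inequality $\frac{\rm d}{\dt}m(t)\ge -C\,m(t)$ which, after integration, gives $m(t)\ge m(0)\,{\rm e}^{-Ct}\ge \e\,{\rm e}^{-Ct}$. Concretely, I would fix a radial function $0\le\phi\le 1$ with $\phi(x)=1$ on $\{\delta\le|x|\le R\}$, ${\rm supp}\,\phi\subset\{\frac12\delta\le|x|\le 3R\}$, and with $|\na\phi|$ and $|\Delta\phi|$ controlled by the widths of the two transition annuli $T_{\rm in}=\{\frac12\delta\le|x|\le\delta\}$ and $T_{\rm out}=\{R\le|x|\le 3R\}$. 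Then $m(0)\ge\int_{\{\delta\le|x|\le R\}}u_0\dx\ge\e$, while $m(t)\le\int_{\{\frac12\delta\le|x|\le 3R\}}u(x,t)\dx$, so a lower bound on $m(t)$ yields the claim. A preliminary rescaling $u\mapsto u_{\delta^{-1}}$ as in \rf{scale} normalises $\delta=1$ and makes transparent that the eventual rate depends on $M,\e,\delta$ but not on $R$.

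The starting point is the evolution identity already used in the proof of Lemma \ref{nonum}:
\[
\frac{\rm d}{\dt}m(t)=\int\Delta\phi(x)u(x,t)\dx-\frac1{4\pi}\iint u(x,t)u(y,t)\frac{(x-y)\cdot(\na\phi(x)-\na\phi(y))}{|x-y|^2}\dx\dy .
\]
For the linear term I would build $\phi$ so that $\Delta\phi\ge -C(\delta)\phi$: quadratically vanishing and convex at the two edges of its support, concave only on the shoulders where $\phi$ is of order one, and flat (hence harmonic) in between. The contribution of $T_{\rm out}$ is then of size $O(R^{-2})$ and harmless, while the inner shoulder (of width $\sim\delta$) produces the $\delta$-dependent constant, so that $\int\Delta\phi\,u\dx\ge -C(\delta)m(t)-CR^{-2}M$.

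The heart of the matter is the nonlinear (drift) term, which equals $\int u(x,t)\,g'(|x|)\,\pa_r v(x,t)\dx$ for $\phi(x)=g(|x|)$; its integrand is supported on $T_{\rm in}\cup T_{\rm out}$. The only genuinely dangerous part is $x\in T_{\rm in}$ with $\pa_r v<0$, i.e. mass in the inner gap being drawn towards a concentration forming at the origin. There $\pa_r v(x)=-\frac1{2\pi|x|}\int_{\{|y|\le|x|\}}u\dy+(\text{fluctuation})$, whose size I would bound uniformly on $T_{\rm in}$ by $C(M,\delta)$. This is exactly where the $N$-symmetry is forced: for $N\ge cM$ a collapse at any point $p\ne 0$ is impossible, since by \rf{sym} it would be accompanied by $N$ symmetric collapses, each carrying at least $8\pi$ of mass, for a total exceeding $M$. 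Hence no concentration can form off the origin, the fluctuation of $\pa_r v$ on the transition annuli stays controlled, and the velocity bound is uniform in the data and independent of $R$. Outward escape through $T_{\rm out}$ is handled in the same spirit (the drift there is weak and inward, and there is a factor-three gap), contributing only an $O(R^{-2})$ term.

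Combining the two estimates yields $\frac{\rm d}{\dt}m(t)\ge -C(M,\e,\delta)\,m(t)$, whence $m(t)\ge\e\,{\rm e}^{-C(M,\e,\delta)t}$ and the proposition follows. I expect the main obstacle to be precisely this drift term: one must obtain a bound proportional to $m(t)$, rather than to the full, uncontrolled mass $\int_{T_{\rm in}}u\dx$ which sits where $\phi$ is small, and it is only the absence of off-origin concentration, guaranteed by $N$-symmetry with $N\ge cM$, that makes the chemotactic velocity on the inner transition annulus uniformly bounded and the decay rate independent of $R$.
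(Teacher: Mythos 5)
Your skeleton (weighted annular mass $m(t)$, symmetrized evolution identity, Gronwall) is the same as the paper's, and your linear term is fine \emph{provided} you really enforce $\Delta\phi\ge -C\phi$ pointwise, as the paper's weight does. Your alternative bookkeeping with a leftover additive error is not harmless: from $m'(t)\ge -Cm(t)-KR^{-2}M$ one only gets $m(t)\ge m(0)e^{-Ct}-\frac{KR^{-2}M}{C}\left(1-e^{-Ct}\right)$, which is not bounded below by $\e e^{-Ct}$ and can even become negative; so the weight must be built so that no such additive term appears.

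The genuine gap is in the drift term. You reduce it to a \emph{pointwise} uniform bound $|\pa_r v|\le C(M,\delta)$ on the inner transition annulus, justified by the qualitative remark that $N$-symmetry with $N\ge cM$ forbids collapse off the origin. This step fails for two reasons. First, absence of blowup off the origin is a statement about the solution at the blowup time; it yields no quantitative bound on $\nabla v$ that is uniform in $t<\tbl$ and in the data, whereas the constant in the proposition must depend on $M,\e,\delta$ only --- this uniformity is exactly what is needed when the proposition is later applied to the rescaled solutions $u_j$, whose initial data are increasingly concentrated. Second, no such pointwise bound exists: take $N$-symmetric data consisting of $N$ bumps of mass $M/N$ centered on the circle $\{|x|=\frac34\delta\}$, each supported in a ball of radius $r$; then near each bump $|\nabla v|\sim \frac{M/N}{2\pi r}$, arbitrarily large as $r\to0$, while $N\ge cM$ and all hypotheses of the proposition hold. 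So large-but-finite concentration off the origin (as opposed to collapse) is perfectly possible and makes the chemotactic velocity pointwise unbounded. What actually saves the estimate is bilinear, not pointwise: after symmetrizing the kernel, the dangerous near-diagonal interactions are small because $N$-symmetry bounds the mass of any sector of aperture $2\pi/N$ by $CM/N$ and the mass of $\{y:|x-y|\simeq L/N\}$ (for $|x|\ge\frac12$) by $CLM/N$; summing dyadically over $L$ and absorbing via the inequality $|\mu|+|\mu|^{1/2}\le\rho\Delta\Phi+C_\rho\Phi$ gives a bound by $\frac12\int\Delta\Phi\,u+C\int\Phi\,u$ once $N\ge cM$. (In the bump example, the self-interaction of the bumps is $\le CN(M/N)^2=CM^2/N$ --- small precisely by symmetry, even though $\nabla v$ is huge.) This sector/dyadic analysis of the middle-range interactions is the heart of the paper's proof of Proposition \ref{symmass} and is missing from your proposal; without it, or a substitute for it, the argument does not close.
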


\begin{proof}
First, we assume that $\delta=1$ and  $1<R<\infty$. 
Consider a weight function $\phi:\R^+\to\R^+$ such that
\begin{displaymath}
\phi(s)=\left\{\begin{array}{ll}
0 & \textrm{if $0\le s\le \frac12$,}\\
(2s-1)^2 &\textrm{if $\frac12\le s\le \frac34$,}\\
1 & \textrm{if $1\le s\le R$},\\
\big(1-\frac{s-R}{2R}\big)^2 &\textrm{if $2R\le s\le 3 R$},\\
0 & \textrm{if $3R\le s$}.\end{array}\right.
\end{displaymath}    
Such a function $\phi$ can be chosen  increasing on $[0,R]$,  decreasing on $[R,3R]$, ${\rm supp\,}\phi\subset [1/2,3R]$, and  piecewise ${\mathcal C}^2$,  with its derivatives $\phi^{(k)}$ satisfying for $k=0,\, 1,\, 2$: 
\begin{equation}\label{p:est}
 |\phi^{(k)}(s)|\le \frac{C}{s^k}
\qquad  \text{with a constant $C$ {\it independent} of $R$ and of $s>0$.}
\end{equation}
We define, for the  function  $\Phi(x)=\phi(|x|)$,  the moment function of the solution $u$ by  $H(t)= \int \Phi(x) u(x,t)\dx$ that measures mass of $u$ contained in the annulus $\left\{\frac12\le |x|\le 3R\right\}$.

First, we present a particularly simple argument for radial solutions based on the identity from Lemma \ref{potential}.
 For the evolution of $H$  we have the differential inequality
\bea
\frac{{\rm d}}{\dt}H(t)
&=&\int \Phi(x)\Delta u(x,t)\dx +\int u(x,t)\nabla v(x,t)\cdot \nabla\Phi(x)\dx\nonumber\\
&= &\int \Delta\Phi(x)\,u(x,t)\dx +\int u(x,t)\big(\nabla v(x,t)\cdot x\big)\frac{\phi'(|x|)}{|x|}\dx\nonumber\\
&\geq &\int u(x,t)\bigg(\Delta\Phi(x) - \frac{1}{2\pi}M(|x|, t)\frac{|\phi'(|x|)|} {|x|} \bigg)\dx\nonumber\\
&\geq &-C(M)\int \Phi(x)u(x,t)\dx=-C(M)H(t)\label{H}
\eea
with a constant $C(M)$ independent of $R$. 
In the last inequality, we have used the bound
$$\Delta \Phi(x) -\frac{1}{2\pi}M\frac{1}{ |x|} |\phi'(|x|)|\geq -C(M)\Phi(x),$$
which is valid since $\Delta \Phi(x)\ge  C|x|^{-2}>0$ for $\frac12 \leq |x|\leq \frac34$ and  $\frac52 R  \leq |x|\leq 3 R$, and because of estimates \rf{p:est}.
Since by the assumption  we have $H(0)\geq \e>0$,
 the above inequality yields  the conclusion of Proposition \ref{symmass} for radial solutions.

Now, we prove Proposition \ref{symmass} under the  $N$-symmetry assumption.
 Our goal is to derive again a differential inequality of the form
 $
 \frac{{\rm d}}{\dt}H(t)\ge -CH(t)
 $
 with a suitably large constant $C$ depending only on  $M$ and $N$, as was done in \rf{H}. 
Let us emphasize here that the crucial consequence of the $N$-symmetry assumption consists in some cancellations in the bilinear (w.r.t. $u$) integral $\int u\nabla v\cdot\nabla\Phi$ appearing in the first line of formula \rf{H}. 

Let us decompose the gradient of the weight function $\Phi$ as 
\be
\nabla\Phi=\mu +\nu \label{decomp}
\ee
 with 
 \be
 {\rm supp\,}\mu\subset\left\{\frac12\le|x|\le2\right\},\label{mu}
 \ee
 and 
 \be
 {\rm supp\,}\nu\subset\{R\le|x|\le 3R\}.\label{nu} 
 \ee 
We write $\frac{{\rm d}}{\dt}H(t)=I_0+I_1$, where 
$I_0=\int \Delta\Phi(x)u(x,t)\dx$ 
and 
$$I_1=-\iint u(x,t)u(y,t)\frac{x-y}{|x-y|^2}\cdot\nabla\Phi(x)\dx\dy\equiv I_{1,\mu} +I_{1,\nu},$$
according to \rf{mu}--\rf{nu}.  
Further, given  $A\ge 4$, we decompose the integral  $I_{1,\mu}$ into the sum of integrals 
$$I_{1,\mu}=J_1+J_2+J_3$$ 
with the integration domains 
$$\left\{\frac14\le|x|\le A,\, |y|\ge A\right\}, \quad
\left\{\frac14\le|x|\le 2,\, |y|\le \frac14 \right\}, \quad
\left\{\frac14\le|x|\le A,\, \frac14\le |y| \le A\right\},
$$
respectively. 
Note that, in fact, ${\rm supp\,}\mu\subset\subset\left\{\frac14\le|x|\le 2\right\}$.

The  integrals $J_1$, $J_2$ will be estimated rather crudely.
Using the property \rf{mu} we have the following bound for the integral $J_1$:
\bea
J_1&=&-\int_{\{\frac{1}{2}\le|x|\le 2\}}\int_{\{|y|\ge A\}} u(x,t)u(y,t)\frac{x-y}{|x-y|^2}\cdot \mu(x)\dx\dy \nonumber\\
&\ge& -\frac{2C}{A} M\int_{\{\frac{1}{2}\le|x|\le 2\}}u(x,t)\dx\nonumber\\
&\ge& -\frac{2CM}{A}\left(\int_{\{\frac{1}{2}\le|x|\le 2\}}u\Delta\Phi +C\int_{\{\frac{1}{2}\le|x|\le 2\}}u\Phi\right),\nonumber
\eea
since for 
$|x|\le 2$ and $4\le A\le |y|$ the relation $|x-y|\ge\frac12A$ holds and, moreover, $1\le \Delta\Phi(x)+C\Phi(x)$ for each $\frac12\le|x|\le 2$. 
Finally, let us define $A=4CM$. 

Now, observe that for each $\r>0$ there exists a constant $C_\r$ such that the  both ``weights'' $|\mu|$ and $|\mu|^\frac12$ are bounded by $\Delta\Phi$ and $\Phi$, i.e. the inequality
\be
|\mu(x)|+|\mu(x)|^\frac12\le \r\Delta\Phi(x)+C_\r\Phi(x)\label{est-mu}
\ee
holds. This inequality, together with $|x-y|\ge\frac14$ and \rf{mu}, leads to the estimate
\bea
J_2&=& -\int_{\{\frac12\le|x|\le2\}}\int_{\{|y|\le\frac14\}} u(x,t)u(y,t)\frac{x-y}{|x-y|^2}\cdot\mu(x)\dx\dy\nonumber\\
&\ge& -4\r M\int_{\{\frac12\le|x|\le2\}}\Delta\Phi(x)u(x,t)\dx-{C_\r}M\int_{\{\frac12\le|x|\le2\}}\Phi(x)u(x,t)\dx\nonumber
\eea 
where we put $\r=\frac{1}{24M}$, $A=6CM$, and we obtain the required inequality. 

To exploit some  gain  from the symmetry assumption \rf{sym} (i.e. discover  some cancellations in the integral $J_3$), we decompose $J_3$ further into the  integrals $J_{3,1}$ and $J_{3,2}$ over the disjoint sets
$$
\Omega_1=\left\{\frac14\le |x|\le A,\, \frac14 \le |y|\le A,\, x\in\Gamma_y\right\}\ {\rm and\ }
\Omega_2=\left\{\frac14\le |x|\le A,\, \frac14 \le |y|\le A,\, x\notin\Gamma_y\right\},
$$
where $\Gamma_y=\left\{x: \left|\frac{x}{|x|}-\frac{y}{|y|}\right|\le \frac{2\pi}{N}\right\}$ is the sector determined by the direction of $y$. 
 Obviously, we have $x\in \Gamma_y\Leftrightarrow y\in\Gamma_x$.
Denote by $S\subset\R^4$ the support of the function $|\mu(x)|+|\mu(y)|$.  
Since $\Omega_1$ is symmetric and if $(x,y)\in S$ then either $\frac12<|x|<2$ or $\frac12<|y|<2$, so for the first integral we have the representation 
\bea
J_{3,1} &=& \iint_{\Omega_1} u(x,t)u(y,t)\frac{x-y}{|x-y|^2}\cdot\mu(x)\dx\dy\nonumber\\
&=& \frac12\iint_{\Omega_1} u(x,t)u(y,t)\frac{(x-y)\cdot(\mu(x)-\mu(y))}{|x-y|^2}\dx\dy\nonumber\\
&=&\frac12\iint_{\Omega_1\cap S}u(x,t)u(y,t)\frac{(x-y)\cdot(\nabla\Phi(x)-\nabla\Phi(y))}{|x-y|^2}\dx\dy.\nonumber
\eea
Therefore, we can estimate $|J_{3,1}|$ by  
\bea
|J_{3,1}|&\le & 2\int_{\{\frac14\le|x|\le A,\, \frac14\le|y|\le A,\, x\in \Gamma_y\}\cap S} u(x,t)u(y,t)\dx\dy \nonumber\\
&\le& C\frac{4}{N}M\int_{\{\frac12\le|x|\le 2\}}u(x,t)\dx,\nonumber\\
&\le& C\frac{4}{N}M \left(\int_{\{\frac{1}{2}\le|x|\le 2\}}u\Delta\Phi +C\int_{\{\frac{1}{2}\le|x|\le 2\}}u\Phi\right),\nonumber
\eea
since 
$$
\int_{\{\frac14\le |y|\le 2,\, y\in\Gamma_x\}}u(y,t)\dy\le C\frac{2}{N}M
\qquad
 \text{for $x\in{\rm supp\,}\mu,$}
$$ 
due to the $N$-symmetry assumption. 

For the estimate of the integral $J_{3,2}$, note that the weight function $\mu$ satisfies
$$
\frac{|(x-y)\cdot(\mu(x)-\mu(y))|}{|x-y|^2}\le \frac{|\mu(x)-\mu(y)|}{|x-y|}. 
$$
Moreover, we have  
$$|\mu(x)-\mu(y)|\le C|x-y|,\ \ \ |\mu(x)-\mu(y)|\le |\mu(x)|+|\mu(y)|,$$
so putting together those inequalities, we arrive at
$$
|\mu(x)-\mu(y)|\le C|x-y|^{\frac12}\left(|\mu(x)|^{\frac12}+|\mu(y)|^{\frac12}\right).
$$
Observe that, if $x\notin\Gamma_y$, $|x|,\, |y|\ge \frac12$,  then $|x-y|\ge \frac{1}{cN}$ for some $c>0$. 
Consequently, using the symmetry of $\Omega_2$, we obtain after splitting the integration domain into dyadic pieces 
\bea
|J_{3,2}|&\le& C\iint_{\{\frac12\le|x|\le A,\, \frac14\le|y|\le A,\, x\notin\Gamma_y\}}|x-y|^{-\frac12}\left(|\mu(x)|^\frac12+|\mu(y)|^\frac12\right) u(x,t)u(y,t)\dx\dy\nonumber\\
&\le&C \sum_{1\le L\le cN, {\rm dyadic}}\!\!\left(\frac{L}{N}\right)^{-\frac12}\!\!\!\!\!\!\!\!\!\!\int_{\{\frac12\le|x|\le A,\, \frac14\le|y|\le A,\, x\notin\Gamma_y,\, |x-y|\simeq \frac{L}{N}\}}\!\!\!\!\!\!\!\!\!\!\!\!\!\!  \!\!\!\!\!\!\!\!\!\!\!\!\!\!\!\! \!\!\!\!\!\!\!\!\!\!\!\!\! \int \left(|\mu(x)|^\frac12+|\mu(y)|^\frac12\right)u(x,t)u(y,t)\dx\dy \nonumber
\eea 
with a constant $C>0$.

Observe that by the $N$-symmetry property, for $\frac12 \le |x|\le 2$, we have the bound 
$$
\int_{\{|y|\ge \frac14,\, |x-y|\simeq \frac{L}{N}\}}u(y,t)\dy \le C\frac{L}{N}M.
$$
Applying this, we estimate each summand by 
$$2C L^{-\frac12}N^\frac12\int|\mu(x)|^\frac12 u(x,t)\dx\, M\frac{L}{N}=C M L^\frac12N^{-\frac12}\int|\mu(x)|^\frac12u(x,t)\dx.$$
Since $\sum_{1\le L\le cN,{\rm dyadic}}L^\frac12\simeq cN^\frac12$, the entire sum is bounded from above by an application of inequality \rf{est-mu} 
$$
CM\int|\mu(x)|^\frac12 u(x,t)\dx \le C\r M\int_{\{\frac12\le|x|\le 2\}} \Delta\Phi(x)u(x,t)\dx + C_\r M\int_{\{\frac12\le|x|\le 2\}} \Phi(x)u(x,t)\dx.
$$
Now, we put $\rho=\frac{1}{6CM}$ and adding the inequalities we obtain the desired estimate \ \ $-|I_{1,\mu}| \geq -\frac12 \int_{\{\frac12\le|x|\le 2\}} \Delta\Phi \, u  -C\int_{\{\frac12\le|x|\le 2\}} \Phi\, u$.

Similar considerations apply to the part containing the function $\nu$ in the decomposition \rf{decomp}, and we get \ \ $-|I_{1,\nu}| \geq -\frac12 \int_{\{R\le|x|\le 3R\}} \Delta\Phi\, u  -CR^{-2}\int_{\{R\le|x|\le 3R\}} \Phi\, u$. We note that for $R=4$ the proof is literally %word by word 
as the above, and the general case
$R \geq 4$ follows by scalling. Adding this inequalities, and using $\Delta\Phi(x)=0$ for $2 \le |x|\le R$,
%This, together with \rf{est-mu} leads to 
we obtain the differential inequality
$$
\frac{{\rm d}}{\dt}H(t)\ge -CH(t),
$$
similarly as was in \rf{H}. 
Note that we established that the assumption $N\geq CM$ where $C$ is large enough (but it  does not depend on $M$) is sufficient in order to  Proposition \ref{symmass} holds for $\delta=1$.

To complete the proof for arbitrary  $0<\delta\le 1<R$, it suffices   to use  the rescaling defined in \rf{scale} with $\lambda=1/\delta$, and then to replace $\delta R$ by $R$. 
\end{proof}

\subsection{Proofs of the $8\pi$-mass concentration  results}

\begin{proof}[Proof of Theorem \ref{symm}.]  
The idea of the proof is easy.
Suppose  for contradiction that $\int_{\{|x|\leq R_j\}}u(x, t_j)\dx \geq 8\pi +\e$ for some sequences $t_j\nearrow \tbl$, $R_j\searrow 0$, and a positive $\e>0$. 
It suffices to construct a sequence $\{S_j\}$  such that 
$$
\int_{\{S_j<|x|\leq 3 R_j\}}u(x, t)\dx \geq \eta(\e)>0 \qquad \text{for all $t\in [t_j,\tbl)$},
$$
where $\eta(\e)>0$ is independent of $j$.
If the annuli $\{x\in\R^2\,:\, S_j<|x|<R_j\}$ are disjoint, then the total  mass $\int u(x,t)\dx$ tends to infinity as $t\to \tbl$, which is impossible.

Under the assumption that $\int_{\{|x|\leq R_j\}}u(x, t_j)\dx \geq 8\pi +\e$ for a positive $\e>0$ and some sequences $t_j\nearrow \tbl$, $R_j\searrow 0$, there exists a sufficiently large constant $L$ depending only on $M$ and $\e$, such that we have  $\int \psi_{LR_j}(x)u(x, t_j)\dx\geq 8\pi +\frac\e{2}$.
To simplify the notation,  we will denote from now on the radii $LR_j$ again by $R_j$.
\par
We also define a  sequence $0<S_j< R_j$ such that $w_{S_j}(t_j)= 8\pi +\frac\e{4}$, while  by our choice $w_{R_j}(t_j)\geq 8\pi +\frac\e{2}$.
Observe, that the rescaled function $u_j(x,t)=S_j^2u(S_jx, t_j+ S_j^2 t)$ is a solution of the system \rf{equ}--\rf{eqv}  with the same mass $M$, cf. \rf{scale}.
Moreover, its initial condition $u_j(x,0)= S_j^2u(S_jx, t_j)$ satisfies assumptions of Proposition \ref{genbl}, that is
$$
\int_{\{|x|\leq 1\}}u_j(x,0)\dx \geq
\int_{\{|x|\leq 1\}}\psi(x)u_j(x,0)\dx \geq 8\pi +\frac\e{4}.
$$
Hence, the solution $u_j(x,t)$ cannot be classical after (by the definition, it means that this blows up not later than) $T_{\rm w}$, where    $T_{\rm w}=T_{\rm w}(M, \e)$ can be chosen {\em independently} of $j$ by Proposition \ref{genbl}. 
Denote by $\tbl(j)\leq T_{\rm w}(M, \e)$ the blowup time of the solution  $u_j(x,t)$.
\par\noindent By the definition of $S_j$ and $R_j$ and \rf{psi}, we have
$$w_{R_j}(t_j)-w_{S_j}(t_j)\geq \frac\e{8}.$$
Passing to the rescaled solution $u_j(x,t)$, we obtain
$$
\int\bigg(\psi_{\frac{R_j}{S_j}}(x)-\psi(x)\bigg)u_j(x,0)\dx\geq  \frac\e{16}.
$$
Since we have by \rf{enjj} $\bigg|\psi_{\frac{R_j}{S_j}}(x)-\psi(x)\bigg|\leq C|x|^2$ for some  $C=C(B)>0$, we obtain for an appropriate choice of the constant $\delta=\delta(M, \e)$ (it suffices to choose $\delta^2(M,\e)={\e}/({16CM})$),  still independent of $j$, 
$$\int_{\big\{\delta \leq |x|\leq \frac{R_j}{S_j}\big\}}u_j(x,0)\dx \geq \frac\e{16}.$$
Applying Proposition \ref{symmass} we infer that for $t\leq \tbl(j)$ we have
$$\int_{\big\{\frac12\delta \leq |x|\leq 3\frac{R_j}{S_j}  \big\}}u_j(x,t)\dx\geq\beta\frac\e{8},$$
where the constant $\beta>0$ depends only on $T_{\rm w}(M, \e)$, $M$, $\e$, and consequently
$$\int_{\big\{\frac12\delta \leq |x|\leq 3\frac{R_j}{S_j } \big\}}u_j(x, \tbl(j))\dx\geq \beta\frac\e{8}$$
with the same constant $\beta $.
Scaling back to the original coordinates, we get at the blowup time $\tbl$ 
\be
\int_{\big\{\frac12\delta S_j \leq|x|\leq 3 R_j\big\}}u(x, \tbl)\dx\geq \beta\frac\e{8}.
\label{bigmass}
\ee
So, if we choose $R_{j+1}$ (passing, if necessary, to a subsequence) satisfying  $3R_{j+1}\leq \frac12\delta S_j$, so that the annuli $\left\{\frac12\delta S_j\le|x|\le 3R_j\right\}$ are disjoint, we infer that $u(x, \tbl)$ accumulates infinite mass. Indeed,  masses estimated in \rf{bigmass} (each bounded  from below by the same positive number) are distributed over disjoint annuli $\{\frac12\delta S_j\le |x|\le 3R_j\}$ ---  a~contradiction.        
\end{proof}

\begin{proof}[Proof of Corollary \ref{cor:symm}.]
First, recall the main result obtained in \cite{BZ}:
 if the initial condition satisfies $\int_Bu_0(x)\dx\leq 8\pi-\e$ for every ball $B$ of radius $\delta$,  then the solution of problem \rf{equ}--\rf{ini} exists at least on the interval $[0, T(M,\delta,\e)]$, where the number $T(M,\delta,\e)>0$ depends on $M$, $\delta$, $\e$, only.

Now, let $u$ be an $N$-symmetric  solution with properties assumed in Theorem \ref{symm}. 
Suppose that, for some fixed $\e>0$ and $\delta>0$, there exists a sequence $t_j\nearrow \tbl$ such that
\begin{equation}\label{***}
\int_{\{|x|\le\delta\}} u(x,t_j)\dx \leq 8\pi-\e
\qquad \text{for each $t_j$}.
\end{equation}
Using the $N$-symmetry property and \rf{***}, it is easy to show that there exists $\delta_1\in (0,\delta]$ such that 
\begin{equation*}
\int_{\{|x-x_0|\le\delta_1\}} u(x,t_j)\dx \leq 8\pi-\e
\qquad \text{for each $x_0\in\R^2$ and each $t_j$}
\end{equation*}
(use the $N$-symmetry if $|x_0|>\delta/2$ and \rf{***} if $|x_0|<\delta$).

Now, by \cite{BZ}, the solution of problem \rf{equ}--\rf{ini} with the initial condition $u(x,t_j)$, $\tbl-t_j<T(M,\delta_1,\e)/2$, exists at least on the interval $[0, T(M,\delta_1,\e)/2]$.
Thus, we have extended the solution beyond the blowup time $\tbl$ which is a contradiction. 
Therefore, we have proved that 
$$
\liminf_{t\to \tbl} \int_{\{|x|<\delta \}}u(x,t)\dx \geq 8\pi\qquad \text{for all $\delta>0$},
$$
which, together with the upper bound in Theorem \ref{symm}, completes the proof of this corollary.
\end{proof}

\end{document}